\newtheorem{proposition}{Proposition}[section]
\newtheorem{theorem}[proposition]{Theorem}
\newtheorem{corollary}[proposition]{Corollary}
\newtheorem{lemma}[proposition]{Lemma}
\newtheorem{remark}[proposition]{Remark}
\newcommand{\nc}{\newcommand}
\nc{\R}{\mathbb{R}}
\nc{\K}{\mathcal{K}}
\nc{\NK}{\mathcal{N}(X)}
\nc{\LK}{\mbox{\bf [}}
\nc{\RK}{\mbox{\bf ]}}
\nc{\Rd}{\mathbb{R}^{d}}
\nc{\I}{{\bf 1}}
\nc{\Z}{{\mathbb Z}}
\nc{\meares}{\,\llcorner\,}
\nc{\mbK}{\mathbb{K}}
\nc{\mbX}{\mathbb{K}}
\nc{\nor}{{\rm nor}\,}
\nc{\Nor}{{\rm Nor}}
\nc{\Tan}{{\rm Tan}}
\nc{\cone}{{\rm cone}}
\nc{\SOd}{{\rm SO}(d)}
\nc{\reach}{\operatorname{reach}}
\nc{\bC}{{\bf C}}
\nc{\dist}{\operatorname{dist}}
\nc{\Ha}{{\mathcal H}}
\begin{document}

\author{Daniel Hug}
\thanks{The research has been supported by the DFG project HU
1874/4-2 (D. Hug) and the GA\v CR project P201/15-08218S (J. Rataj)} 
\address{Karls\-ruhe Institute of Technology (KIT), Department of Mathematics, 
D-76128 Karls\-ruhe, Germany}
\email{daniel.hug@kit.edu}
\urladdr{http://www.math.kit.edu/$\sim$hug/}
\author{Jan Rataj}
\address{Charles University, Faculty of Mathematics and Physics, 
Sokolovska 83, 186 75 Praha 8, 
Czech Republic}
\email{rataj@karlin.mff.cuni.cz}
\urladdr{http://www.karlin.mff.cuni.cz/$\sim$rataj/index\underline{ }en.html}
\title{Mixed curvature measures of translative integral geometry}
\date{\today}

\begin{abstract}
The curvature measures of a set $X$ with singularities are measures concentrated 
on the normal bundle of $X$, which describe the local geometry of the set $X$. 
For given finitely many convex bodies or, more generally, sets with positive reach, the translative integral formula for curvature measures 
 relates the integral mean of the curvature measures of the intersections of the given sets, one fixed and the others translated, to the mixed curvature measures of the given  sets. In the case of two sets of positive reach,  
a representation of these mixed measures in terms of generalized curvatures, defined on the normal bundles of the sets, is known. For more than two sets, a description of mixed curvature measures in terms of rectifiable currents has been derived previously. Here we provide a representation 
of mixed curvatures measures of sets with positive reach based on generalized curvatures. 
 The special case of convex polyhedra is treated in detail.
\end{abstract}

\keywords{Convex body, set of positive reach, convex polyhedron, curvature measure, translative integral geometry, mixed functionals and measures, geometric measure theory}
\subjclass[2000]{53C65; 52A20}
\maketitle

\section{Introduction}
The {\it reach} of a set $X\subset\R^d$, denoted $\reach X$, is the infimum of all $r\geq 0$ such that for each point $z\in\R^d$ with $\dist(X,z)\leq r$ there is a unique nearest point $\Pi_X(z)$ in $X$. Sets with positive reach were studied first by Federer \cite{F59} who showed that they satisfy a local Steiner formula, that is, for any $0<r<\reach X$ and any Borel set $ B\subset\R^d$,
\begin{equation}  \label{SF}
\Ha^d(X_r\cap\Pi_X^{-1}(B))=\sum_{k=0}^d \kappa_{d-k}r^{d-k}\bC_k(X,B),
\end{equation}
where $X_r:=\{z\in\R^d:\, \dist(z,X)\leq r\}$ and $\kappa_j:=\pi^{\frac j2}/\Gamma(1+\frac j2)$. The coefficients $\bC_k(X,\cdot)$ are signed Radon measures, called {\it curvature measures} of order $k$ of $X$ if $0\leq k\leq d-1$, and $\bC_d(A,\cdot)=\Ha^d(X\cap\cdot)$. The curvature measures possess the usual properties of curvature measures of sets with $C^2$ smooth boundaries and of convex sets, in particular, they satisfy the Gauss-Bonnet formula and the Principal Kinematic Formula (see \cite{F59}).

The main difference to the smooth case is that the Gauss map is not defined uniquely on the boundary of a set $X$ with positive reach. Therefore, the {\it unit normal bundle} 
$$\nor X:=\{(x,u)\in\R^d\times S^{d-1}:\, x\in X,\, u\in\Nor(X,x)\}$$
is used instead (here $\Nor(X,x)$ is the normal cone of $X$ at $x\in X$, defined as the dual convex cone to the tangent cone $\Tan(X,x)$), and the role of the Gauss map from the smooth case is played by the projection $(x,u)\mapsto u$ to the second component. Thus, 
in generalization of the curvature measures $\bC_k(X,\cdot)$ on $\R^d$, 
it is convenient to consider curvature measures as measures  on $\R^d\times S^{d-1}$ which are supported by the unit normal bundle of $X$. Such measures are determined by the refined local Steiner formula which states that, for any $0<r<\reach X$ and any bounded Borel set $ A \subset\R^d\times S^{d-1}$, 
\begin{equation}  \label{SF2}
\Ha^d((X_r\setminus X)\cap\xi_X^{-1}(A))=\sum_{k=0}^{d-1} \kappa_{d-k}r^{d-k}C_k(X,A ),
\end{equation}
where $\xi_X:z\mapsto (\Pi_X(z),\frac{z-\Pi_X(z)}{\|z-\Pi_X(z)\|})$, $z\in X_r\setminus X$. The coefficients $C_k(X,\cdot)$ are 
 signed Radon  measures  on $\R^d\times S^{d-1}$, their first component projections agree with the curvature measures from \eqref{SF} and they are called {\em generalized curvature measures} \cite{Z86}, support measures \cite{SW08} or curvature-direction measures. In the following, we shall also use the short name curvature measures for the measures in \eqref{SF2}.

One starting point of the present work are 
kinematic formulas of integral geometry for sets $X,Y\subset\R^d$ of positive reach. Let ${\rm G}_d$ denote the Euclidean motion group of $\R^d$ and let $\mu_d$ denote the suitably normalized Haar measure on ${\rm G}_d$. For bounded Borel sets $\alpha,\beta\subset\R^d$, the 
principal kinematic formula for curvature measures states that 
$$
\int_{{\rm G}_d}\bC_k(X\cap g Y,\alpha\cap g\beta)\, \mu_d(dg)=\sum_{\substack{0\le i,j\le d\\i+j=d+k}}c(d,i,j)\bC_i(X,\alpha)\bC_j(Y,\beta),
$$
where  $c(d,i,j)$ are explicitly known constants (see 
\cite{RaZ95}, \cite{SW08}). 
 In many applications in stochastic geometry it is, however, necessary to consider integration with respect to translations only. In particular, this is crucial for the investigation of stationary random sets which are not isotropic (see \cite{SW08}). The basic formula of {\it translative} integral geometry thus deals with the integrals 
$$\int_{\R^d} \bC_k(X\cap(Y+z),\alpha\cap(\beta+z))\, dz,$$
which are expressed as a sum of mixed curvature measures depending on both sets $X$ and $Y$. More generally, using the generalized curvature measures and an arbitrary measurable function $h:\R^{2d}\times S^{d-1}\to [0,\infty]$ with compact support (allowing to include 
directional information), we are interested in the translative integrals 
$$
\int_{\R^d}\int h(x,x-z,u)\, C_k(X\cap (Y+z),d(x,u))\, dz,
$$
which  again can be expressed in terms of integrals of mixed curvatures measures of $X$ and $Y$. 
The iterated version of such a relation works with a finite number $q$ of sets, $q-1$ of them being shifted independently. 
For $q\ge 2$ and given subsets $X_1,\ldots,X_q$ of $\Rd$ with positive reach, the iterated translative integral formula involves the {\it mixed curvature measures}
\[C_{r_1,\ldots,r_q}(X_1,\ldots,X_q;\cdot)\,,\]
for  $r_1,\ldots,r_q\in\{0,\ldots,d\}$ with $r_1+\ldots+r_q\ge(q-1)d$, which are signed Borel measures on $\R^{qd}\times S^{d-1}$, and reads
\begin{eqnarray}  \label{TIF}
&&{\int_{\R^d}\ldots\int_{\R^d}\int  
h(x,x-z_2,\ldots,x-z_q,u)\, C_k(\underline{X}(\underline{z}),d(x,u))}\,dz_q\ldots dz_2 \nonumber\\
&&\qquad =\sum_{\substack{0\le r_1,\ldots,r_q\le d\\ r_1+\ldots+r_q=(q-1)d+k}}
\int h(x_1,\ldots,x_q,u)\,C_{r_1,\ldots,r_q}(X_1,\ldots,X_q;d(x_1,\ldots,x_q,u)),
\end{eqnarray}
where  $k\in\{0,\ldots,d-1\}$, $\underline{X}(\underline{z}):=X_1\cap(X_2+z_2)\cap\ldots\cap(X_q+z_q)$, and $h:\R^{qd}\times S^{d-1}\to [0,\infty]$ is an arbitrary Borel measurable function with compact support. 

This iterated integral formula was first proved in the setting of convex geometry  
by Schneider and Weil \cite{SchneiderandWeil1986} for $q=2$ and by
Weil \cite{Weil1990} for $q\ge 2$ 
in a less general form, namely for a function $h$ which is independent of the direction vector $u$. Subsequently, formula 
\eqref{TIF} was established in \cite{RaZ95} for $q=2$, and in \cite{Rataj1997} for general
$q$, in the setting of sets with positive reach. An extension to
relative curvature measures, that is, curvature measures defined with respect to a non-Euclidean metric, has been obtained in \cite[Section~3]{HugHab}.

For the mixed curvature measures of arbitrary sets with positive reach and $q\ge 3$,  
up to now  only a 
representation was available  which involves the notion of a rectifiable current (see \cite{Rataj1997}). In the special case 
of  mixed curvature measures of two  sets of positive reach (that is, for $q=2$) an integral
representation based on generalized curvature functions, defined on the normal bundles of the sets,  
has already been proved in \cite{RaZ95,RaZ01}, while the case of convex bodies and general $q$ is covered in \cite[Section~4]{HugHab}. In the present paper, we extend all these results by treating the case of  
 a finite sequence of sets with positive reach. For convex polyhedra we obtain a simple description of the 
mixed curvature measure which has an intuitive geometric interpretation (see also \cite[Section~4]{HugHab}) and extends 
the  important special case considered in \cite{WeilPreprint}. 

In Section~2 we introduce the notions and notation used and provide two auxiliary results, one from multilinear algebra and the other from  measure theory. In Section~3 we formulate our main result (Theorem~\ref{MiCurv}) and provide sufficient conditions for the validity of its assumption. We also deal with some important particular cases such as that of convex polyhedra. The last section (Section~4) contains the proof of the main result.

\section{Preliminaries}

The basic setting for this paper will be the $d$-dimensional Euclidean space $\Rd$,
$d\ge 2$, with scalar product $x\cdot y$ and norm $|x|=\sqrt{x\cdot x}$,
$x,y\in\Rd$. 
The same notation will be adopted in any Euclidean space which is treated,
independent of its dimension.  In particular, we 
shall investigate cartesian products such as $\Rd\times\ldots\times\Rd$, with $k$ 
factors, for which we also write  $\R^{k d}$. In this case, we endow
each factor with the same scalar product, and the cartesian product will carry 
the natural scalar product which is derived from its components by summation. 
Let $\mathcal{H}^s$, for $s\ge 0$, denote the $s$-dimensional Hausdorff measure. 
The Euclidean spaces where Hausdorff measures will be considered, will always 
be clear from the context. We write 
$\omega_n:=2\pi^{n/2}/\Gamma\left(n/2\right)$ for the $(n-1)$-dimensional Hausdorff measure 
of the $(n-1)$-dimensional unit sphere $S^{n-1}$ in $\R^n$. 

We shall use the standard notation of multilinear algebra as introduced
in \cite{F69}. In particular, for $k\in\{0,\ldots,d\}$ we denote by  $\bigwedge_kV$ and $\bigwedge^kV$ 
the spaces of $k$-vectors and $k$-covectors, recpectively, of a vector
space $V$, and $\langle\alpha ,\phi\rangle$ stands for the bilinear
pairing, where $\alpha\in\bigwedge_kV$ and $\phi\in\bigwedge^kV$. We denote by
$\Omega^d=e_1'\wedge\cdots\wedge e_d'$ the volume $d$-form in $\Rd$, where 
$\{ e_1',\ldots ,e_d'\}$ is  basis which is dual to the canonical
orthonormal basis $\{ e_1,\ldots ,e_d\}$ of $\Rd$.
The scalar product in $\Rd$ induces a natural linear isomorphism
$v\mapsto v'$ from $\Rd$ to the dual space $\bigwedge^1\Rd$ which
in turn induces a natural linear isomorphism $\alpha\mapsto\alpha '$
from $\bigwedge_k\Rd$ to its dual $\bigwedge^k\Rd$. By means of this
correspondence, the mapping $\alpha\mapsto *\alpha$ from
$\bigwedge_k\Rd$ to $\bigwedge_{d-k}\Rd$ is defined (cf.\ \cite{F69}) by 
$$*\alpha =(e_1\wedge\cdots\wedge e_d)\meares\alpha ',$$
where $\cdot\meares\cdot$ denotes the standard inner multiplication
(see \cite[\S 1.5.1 and \S 1.7.8]{F69}). It follows from the definition that
\begin{equation}    \label{star}
\langle\alpha\wedge *\alpha ,\Omega^d\rangle =|\alpha|^2.
\end{equation}

Let $p$ be a natural number and let $p$ multivectors 
$\alpha_1,\ldots,\alpha_p$ in $\Rd$ be given such that the sum of their multiplicities
equals $(p-1)d$. Then we define the {\it $p$-product} of
$\alpha_1,\ldots ,\alpha_p$ as
\begin{equation}\label{star2}
[\alpha_1,\ldots ,\alpha_p]=\langle (*\alpha_1)\wedge\cdots\wedge
(*\alpha_p),\Omega^d\rangle.
\end{equation}
Note that this definition is consistent with that given in
\cite{Rataj1997}. Moreover, if $\alpha_i$ is a unit simple
multivector and $L_i$ is the linear subspace corresponding to
$\alpha_i$, for $i=1,\ldots ,p$, then the $p$-product $[\alpha_1,\ldots ,\alpha_p]$
coincides, up to sign, with the function $[L_1,\ldots ,L_p]$ defined in
\cite{Weil1990} (see also \cite[\S 14.1]{SW08}). 

Let $q\geq 1$, $d\ge 2$ and  
$r_1,\ldots,r_q\in\{0,\ldots,d\}$ be given with 
$$(q-1)d \leq r_1+\ldots+r_q\leq qd-1. $$
We set $R_1:=r_1$, $R_2:=r_1+r_2$, \ldots ,
$R_q:=r_1+\ldots+r_q$, $r_{q+1}:=qd-1-R_q$ and 
$k:=r_1+\ldots+r_q-(q-1)d\in\{0,\ldots,d-1\}$; 
hence $r_{q+1}=d-1-k$. Let $\text{Sh}(r_1,\ldots,r_{q+1})$ denote the set of 
all permutations of $\{1,\ldots,qd-1\}$ which are increasing on each of the 
sets $\{1,\ldots,R_1\}$, $\{R_1+1,\ldots,R_2\}$, \ldots,
$\{R_q+1,\ldots,qd-1\}$. 

We write
$\varphi_{r_1,\ldots,r_q}\in\mathcal{D}^{qd-1}(\R^{(q+1)d})$ for the 
differential form which is defined by
\begin{align*}
&\left\langle\bigwedge_{i=1}^{qd-1}\left(a_i^1,\ldots,a_i^{q+1}\right),
\varphi_
{r_1,\ldots,r_q}(x_1,\ldots,x_q,u)\right\rangle\\
&\qquad=\frac{1}{\omega_{d-k}}(-1)^{c_1(d,r_1,\ldots,r_q)}\sum_{\sigma\in\text{Sh}(r
_1,
\ldots,r_{q+1})}\text{sgn}(\sigma)\\
&\qquad\qquad\times\left[\bigwedge_{i=1}^{R_1}a_{\sigma(i)}^1,\bigwedge_{i=R_1
+
1}^{R_2}
a_{\sigma(i)}^2,\ldots,\bigwedge_{i=R_{q-1}+1}^{R_q}a^q_{\sigma(i)},
\bigwedge_{i=R_q+1}^{qd-1}a_{\sigma(i)}^{q+1}\wedge u\right]\,,
\end{align*}
where $a_j^i\in\R^d$, for $i\in\{1,\ldots,q+1\}$ and $j\in\{1,\ldots,qd-1\}$,  
is arbitrarily chosen and
$$c_1(d,r_1,\ldots,r_q)=d\sum_{i=1}^qr_i+d\sum_{i=1}^qir_i+\sum_{1\leq i<j\leq q}r_ir_j.$$
Since $\varphi_{r_1,\ldots
,r_q}(x_1,\ldots ,x_q,u)$ depends only on the last vector component, we
shall write briefly $\varphi_{r_1,\ldots ,r_q}(u)$. 

In particular, for $q=1$, $r_1=:r\in\{0,\ldots,d-1\}$, $r_2=d-1-r_1$ and $k=r_1$, we have $c_1(d,r)=1$ and $\varphi_r$ is the $k$-th Lipschitz-Killing curvature form on $\R^{2d}$ involved in the definition of the $k$-th curvature measure (see \cite{Z86,RaZ95} for 
an alternative representation of this differential form). 
The sign determined by $c_1(d,r_1,\ldots,r_q)$ differs from that given in \cite{Rataj1997}, see the proof of Lemma \ref{Le2.1} below for a correction of the last step of the proof of \cite[Lemma 2]{Rataj1997}. 

Let $G_i,\pi$ be the projections defined on $(\R^d)^{q+1}$ by
$$G_i(x_1,\ldots,x_q,u)=x_1-x_i,\quad \pi (x_1,\ldots,x_q,u)=(x_1,u),$$
$i=2,\ldots,q$.

\begin{lemma}[{\cite[Lemma~2]{Rataj1997}}]\label{Le2.1}
For any $q\geq 2$ and $0\leq k\leq d-1$, we have
$$G_2^\#\Omega^d\wedge\cdots\wedge G_q^\#\Omega^d\wedge\pi^\#\varphi_k=
\sum_{\substack{0\leq r_1,\ldots,r_q\leq d\\r_1+\cdots +r_q=(q-1)d+k}}\varphi_{r_1,\ldots,r_q}.$$
\end{lemma}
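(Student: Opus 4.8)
The plan is to reduce the asserted identity to a pointwise statement in the exterior algebra of $\R^d$ and then to a bookkeeping of shuffles and signs, essentially following the proof of \cite[Lemma~2]{Rataj1997} but redoing the sign computation in the last step. Since each $G_i$ is linear and, in the case $q=1$, the form $\varphi_k$ depends only on its direction argument, both sides of the claimed equation are differential forms on $\R^{(q+1)d}$ with constant coefficients, parametrized by $u\in S^{d-1}$; so it is enough to evaluate both on an arbitrary decomposable $(qd-1)$-vector $\bigwedge_{i=1}^{qd-1}(a_i^1,\ldots,a_i^{q+1})$, $a_i^j\in\R^d$, and compare the resulting scalars. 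On the right-hand side this is just the definition of $\varphi_{r_1,\ldots,r_q}$: the value is the sum, over all $(r_1,\ldots,r_q)$ with $r_1+\cdots+r_q=(q-1)d+k$ and over all $\sigma\in\text{Sh}(r_1,\ldots,r_{q+1})$, of the corresponding signed $(q+1)$-products. Since a $p$-product vanishes as soon as one of its entries is a $j$-vector in $\R^d$ with $j>d$, the constraints $r_i\le d$ may be dropped, and the sum may be read as a sum over all ordered partitions $(S_1,\ldots,S_{q+1})$ of $\{1,\ldots,qd-1\}$ with $|S_{q+1}|=d-1-k$.

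For the left-hand side I would first apply the standard formula for evaluating a wedge product of forms on a decomposable multivector. This yields a sum over $\text{Sh}(d,\ldots,d,d-1)$ (with $q-1$ blocks of length $d$ and one block of length $d-1$), in which each $G_i^\#\Omega^d$, paired with a length-$d$ block, contributes the $d\times d$ determinant $\langle\bigwedge(a^1-a^i),\Omega^d\rangle$ formed from the differences $a^1_j-a^i_j$ over that block, while $\pi^\#\varphi_k$, paired with the length-$(d-1)$ block, contributes — by the $q=1$ instance of the definition — a further sum over $\text{Sh}(k,d-1-k)$ of $2$-products $[\bigwedge a^1,\bigwedge a^{q+1}\wedge u]$ carrying the prefactor prescribed for $\varphi_k$. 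Expanding every difference $a^1_j-a^i_j$ by multilinearity, and using the identity $[\gamma,\delta]=\langle\gamma\wedge\delta,\Omega^d\rangle$ valid when $\deg\gamma+\deg\delta=d$ (which follows from \eqref{star2}, \eqref{star}, the relation $**=(-1)^{m(d-m)}\,\mathrm{id}$ on $m$-vectors, and the interior-product calculus of \cite{F69}), one rewrites each summand so that its value equals, up to a sign, a $(q+1)$-product of exactly the shape occurring in the definition of $\varphi_{r_1,\ldots,r_q}$. The combinatorial core is then to reorganize this multiple sum according to the ordered partition $(S_1,\ldots,S_{q+1})$ of $\{1,\ldots,qd-1\}$ that records which index is fed into which of the $q+1$ slots, and to check that it reproduces the right-hand side; here the shuffles $\text{Sh}(r_1,\ldots,r_{q+1})$ built into the definition of $\varphi_{r_1,\ldots,r_q}$ are exactly what is needed to absorb the shuffles coming from the wedge-product evaluation, from the difference expansions, and from $\varphi_k$. (Alternatively, one may proceed by induction on $q$, carrying out only the single step of wedging in one further factor $G_{q+1}^\#\Omega^d$ and showing that it splits each $\varphi_{r_1,\ldots,r_q}$ into a sum of forms $\varphi_{r_1',\ldots,r_{q+1}'}$.)

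The step I expect to be the main obstacle is the sign. One has to keep track at once of: the signature of the outer shuffle in $\text{Sh}(d,\ldots,d,d-1)$; the signs $(-1)^{|T_i|}$ produced when the differences are expanded, together with the signs incurred in regrouping an interleaved wedge of $a^1$'s and $a^i$'s into the standard order; the signature of the inner shuffle in $\text{Sh}(k,d-1-k)$ and the prefactor appearing in the definition of $\varphi_k$; and the signs caused by moving the factor $u$ into position. Reconciling the total sign with the combinatorial constant $c_1(d,r_1,\ldots,r_q)$ is the delicate point, and this is precisely the place where the last step of the proof of \cite[Lemma~2]{Rataj1997} must be corrected — which is why the exponent $c_1(d,r_1,\ldots,r_q)$ fixed above differs from the one used in \cite{Rataj1997}. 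Once the signs are pinned down, comparison with the (sign-corrected) definition of the forms $\varphi_{r_1,\ldots,r_q}$ completes the argument.
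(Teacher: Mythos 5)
Your outline correctly reconstructs the architecture of the proof of \cite[Lemma~2]{Rataj1997}: reduce to evaluation on a decomposable $(qd-1)$-vector (both sides having constant coefficients for fixed $u$), expand the wedge product over shuffles in $\text{Sh}(d,\ldots,d,d-1)$, expand the determinants $\langle\bigwedge(a^1_j-a^i_j),\Omega^d\rangle$ by multilinearity, and reorganize the multiple sum by the ordered partition recording which index is fed into which of the $q+1$ slots; dropping the constraints $r_i\le d$ because overfull slots vanish is also fine. But the paper does not redo any of this --- it cites \cite{Rataj1997} for everything up to the second-to-last line of that proof, where the sign is still correct and equals $(-1)^{c_1}$ with $c_1=(k-1)(q-1)d+\sum_{i=2}^q(d-r_i)(k_i-(i-2)d-1)$, $k_i=R_i-(i-1)d$. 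The \emph{entire} substantive content of the proof here is the step you explicitly defer: verifying that this exponent is congruent modulo $2$ to the constant $c_1(d,r_1,\ldots,r_q)=d\sum_{i}r_i+d\sum_{i}ir_i+\sum_{i<j}r_ir_j$ appearing in the definition of $\varphi_{r_1,\ldots,r_q}$. Saying "once the signs are pinned down, comparison completes the argument" stops exactly at the point where the original argument went wrong and where this lemma's proof must supply something; as written, the proposal does not establish the identity with the paper's sign convention.

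To close the gap you must carry out the parity computation. Writing $m\sim n$ when $m-n$ is even: $(k-1)(q-1)d\sim(q-1)dR_q$ because $k=R_q-(q-1)d$ and $(q-1)d\bigl((q-1)d+1\bigr)$ is even; the $i=1$ term $(d-r_1)(R_1-d-1)=-(d-r_1)(d-r_1+1)$ is even and may be added for free; and $k_i-(i-2)d-1\sim R_i-d-1$ since $(2i-3)d\sim d$. One then expands $\sum_{i=1}^q(d-r_i)(R_i-d-1)$, uses $\sum_i r_iR_i\sim\sum_i r_i+\sum_{i<j}r_ir_j$ (via $r_i^2\sim r_i$) and $\sum_iR_i=\sum_i(q+1-i)r_i$, and collects terms to arrive at $d\sum_i r_i+d\sum_i ir_i+\sum_{i<j}r_ir_j$. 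This elementary but fiddly bookkeeping is the proof of the lemma in the present paper; your proposal identifies it as the delicate point but does not perform it.
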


\begin{proof}
This result was shown in \cite[Lemma~2]{Rataj1997}. In the last but one line of the proof, the sign  was still correct and given by  
$(-1)^{c_1}$ with 
$$c_1=(k-1)(q-1)d+\sum_{i=2}^q (d-r_i)(k_i-(i-2)d-1),$$
with $k_i=R_i-(i-1)d$. Using the symbol $m\sim n$ whenever two integers $m,n$ differ by an even number, we have
\begin{eqnarray*}
c_1(d,r_1,\ldots,r_q)&\sim&(q-1)dR_q+\sum_{i=1}^q(d-r_i)(R_i-d-1)\\
&\sim&(q-1)dR_q+d\sum_{i=1}^qR_i+\sum_{i=1}^qr_iR_i+(d-1)\sum_{i=1}^qr_i\\
&\sim&(q-1)dR_q+d\sum_{i=1}^q(q+1-i)r_i+\sum_{1\le i\leq j\le q}r_ir_j+(d-1)\sum_{i=1}^qr_i\\
&\sim&((q-1)d+d(q+1)+1+(d-1))\sum_{i=1}^qr_i+d\sum_{i=1}^qir_i+\sum_{1\le i<j\le q}r_ir_j\\
&\sim&d\sum_{i=1}^qr_i+d\sum_{i=1}^qir_i+\sum_{1\leq i<j\leq q}r_ir_j,
\end{eqnarray*}
which agrees with the value given in the definition above.
\end{proof}

Let $X\subset\Rd$ have positive reach, and let
$\nor X$ be its unit normal bundle, as defined in the introduction (cf.~\cite{F59}). 
Then $\nor X$ is locally $(d-1)$-rectifiable, 
and for ${\mathcal H}^{d-1}$-almost all $(x,u)\in\nor X$, the 
tangent cone of $\nor X$ at $(x,u)$ is the linear subspace spanned by the
vectors
\begin{equation}   \label{tannor}
\frac 1{\sqrt{1+k_i(x,u)^2}}\big( a_i(x,u),k_i(x,u)a_i(x,u)\big) ,\quad
i=1,\ldots ,d-1,
\end{equation}
where $k_1(x,u),\ldots k_{d-1}(x,u)\in (-\infty,\infty]$ are the (generalized) principal
curvatures and where $a_1(x,u),\ldots ,a_{d-1}(x,u)$ are the corresponding
principal directions at $(x,u)$ (cf.\ \cite{Z86}). In the case of infinite principal curvatures, 
we use the conventions $\frac 1{\sqrt{1+\infty^2}}=0$ and $\frac{\infty}{\sqrt{1+\infty^2}}=1$. The unit normal
bundle is oriented by a unit simple $(d-1)$-vector field $a_X(x,u)$
which can be given as the wedge product of the vectors
from \eqref{tannor} ordered in such a way that
$$\langle a_1(x,u)\wedge\cdots\wedge a_{d-1}(x,u)\wedge
u,\Omega^d\rangle =1.$$
Then the normal cycle of $X$ is the integer rectifiable current
$$N_X=({\mathcal H}^{d-1}\meares\nor X)\wedge a_X$$
and the $k$-th curvature measure of $X$, for $k\in\{0,\ldots ,d-1$\}, can be
represented as
$$C_k(X;A)=(N_X\meares\I_A)(\varphi_k),$$
where $A$ is a bounded Borel subset of $\Rd\times S^{d-1}$.

\subsection{Mixed curvature measures and the translative integral formula}

Let $q,d\ge 2$, and let $X_1,\ldots,X_q\subset\Rd$ be sets with positive
reach. For unit vectors
$u_1,\ldots,u_q\in S^{d-1}$ we set
\[\text{cone}\{u_1,\ldots,u_q\}:=\left\{\sum_{i=1}^q\lambda_iu_i:
\lambda_i\geq 0\text{ for }i=1,\ldots,q, \sum_{i=1}^q\lambda_i^2>0\right\}.\]
Note that $\text{cone}\{u_1,\ldots,u_q\}$ contains a line if and only if it contains the origin, otherwise it is a proper convex cone. Next we introduce the {\em
joint unit normal bundle}
\begin{eqnarray*}
\nor(X_1,\ldots,X_q)&:=&\{(x_1,\ldots,x_q,u)\in \R^{qd}\times S^{d-1}:\,
u\in \text{cone}\{u_1,\ldots,u_q\}\text{ for some 
}\\&&\,\;\,(x_i,u_i)\in\nor X_i,\, i=1,\ldots,q, \, o\not\in \text{cone}\{u_1,\ldots,u_q\}\}\,;
\end{eqnarray*}
compare \cite{Rataj1997}. Note that the open cone was used in
\cite{Rataj1997}. However, in order that \cite[Lemma~3]{Rataj1997} and
further results hold, the definition of the cone given here should be
applied. Further, we define the Borel sets
\[R^c:=\{(x_1,u_1,\ldots,x_q,u_q)\in(\R^d\times S^{d-1})^q:o\notin 
\text{cone}\{u_1,\ldots,u_q\}\}\, ,\]
$$\underline{\NK}:=(\nor X_1\times\ldots\times\nor X_q)\cap R^c$$
and
$$S^{q-1}_+=\{ (t_1,\ldots ,t_q)\in S^{q-1}:\, t_i\geq 0\mbox{ for }i=1,\ldots,q\}
.$$
The map
\[T:\underline{\NK}\times S^{q-1}_+\to \nor(X_1,\ldots,X_q)\]
is defined by
\[T(x_1,u_1,\ldots,x_q,u_q,t):=\left(x_1,\ldots,x_q,\frac{\sum_{i=1}^qt_iu_i}{
|
\sum_{i=1}^qt_iu_i
|}\right)\,.\]
It is easy to see that $T$ is well-defined, locally Lipschitz and onto.
Although $T$ is
not injective, the following lemma (proved for the case $q=2$ in
\cite{Zaehle1999}) is sufficient for our purposes.

\begin{lemma}\label{LemmaRataj}
For $\mathcal{H}^{qd-1}$-almost all elements of $\text{\rm im}(T)$, the 
pre-image 
under $T$ is a single point.
\end{lemma}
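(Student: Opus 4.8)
The plan is to reduce the statement to a dimension count on the "non‑injectivity locus" of $T$ and then to show that this locus has $\mathcal{H}^{qd-1}$‑measure zero in the image. First I would observe that the domain $\underline{\NK}\times S^{q-1}_+$ has Hausdorff dimension $q(d-1)+(q-1)=qd-1$, which matches the dimension of $\mathrm{im}(T)\subset\nor(X_1,\ldots,X_q)$; since $T$ is locally Lipschitz, the coarea/area inequality gives that $T$ cannot increase dimension, and the claim is precisely that $T$ is essentially one‑to‑one at the top dimensional scale. The key geometric fact to exploit is that a point $(x_1,\ldots,x_q,u)\in\nor(X_1,\ldots,X_q)$ with $o\notin\cone\{u_1,\ldots,u_q\}$ determines, for $\mathcal{H}^{qd-1}$‑a.e.\ such point, a \emph{unique} minimal representation $u=\sum t_iu_i/|\sum t_iu_i|$: the $x_i$ are fixed by the first $qd$ coordinates, and for each $i$ the normal cone $\Nor(X_i,x_i)$ is, for $\mathcal{H}^{d-1}$‑a.e.\ $x_i$ on a facet of $\nor X_i$, one‑dimensional, so $u_i$ is forced up to scaling and the barycentric coordinates $(t_1,\ldots,t_q)$ are then determined uniquely by $u$.

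The main steps are: (1) Decompose $\underline{\NK}$ according to the rank of the normal cones. Let $\underline{\NK}^\circ$ be the set of $(x_1,u_1,\ldots,x_q,u_q)$ at which each $\nor X_i$ has a well‑defined $(d-1)$‑dimensional tangent cone \emph{and} $\Nor(X_i,x_i)=\R_{\ge 0}u_i$ is one‑dimensional; by Federer's rectifiability of $\nor X_i$ and the structure of principal curvatures recalled in \eqref{tannor}, the complement of $\underline{\NK}^\circ$ in $\underline{\NK}$ is $\mathcal{H}^{qd-1}$‑null after crossing with $S^{q-1}_+$, hence its $T$‑image is $\mathcal{H}^{qd-1}$‑null in $\mathrm{im}(T)$ (Lipschitz images of null sets of matching dimension are null). (2) On $T(\underline{\NK}^\circ\times \mathrm{int}\,S^{q-1}_+)$ show injectivity directly: if $T(x_1,u_1,\ldots,x_q,u_q,t)=T(\tilde x_1,\tilde u_1,\ldots,\tilde u_q,\tilde t)$, then $x_i=\tilde x_i$; since the $x_i$ lie in the one‑dimensional‑normal‑cone locus we get $u_i=\tilde u_i$; and since $o\notin\cone\{u_1,\ldots,u_q\}$ the cone is a proper convex cone, so the vectors $u_1,\ldots,u_q$ (after discarding repetitions, which only happens on a lower‑dimensional set) are positively independent and the representation $u=\sum t_iu_i/|\sum t_iu_i|$ pins down $(t_1,\ldots,t_q)\in S^{q-1}_+$ uniquely. (3) Handle the boundary strata $t_i=0$ and the strata where some $\cone\{u_1,\ldots,u_q\}$ degenerates (lower‑dimensional normal cones, or coincident $u_i$'s): each of these is carried by a countable union of locally Lipschitz images of manifolds of dimension strictly less than $qd-1$, hence contributes nothing to $\mathcal{H}^{qd-1}$ in the image. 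Combining (1)–(3), the non‑injectivity set in $\mathrm{im}(T)$ is $\mathcal{H}^{qd-1}$‑null, which is the assertion.

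The hard part will be step (1)–(2) reconciled carefully: one must check that for $\mathcal{H}^{qd-1}$‑a.e.\ point of the \emph{domain} the normal cones really are one‑dimensional and the tangent structure of each $\nor X_i$ is the linear span in \eqref{tannor}, and then push this through $T$ while controlling how $T$ fails to be bi‑Lipschitz (its Jacobian can vanish, e.g.\ where the $u_i$ become linearly dependent). The cleanest route is to argue on the image side: show that the set of $w\in\mathrm{im}(T)$ with at least two preimages is contained in $T(\Sigma)$ for an explicit $\mathcal{H}^{qd-1}$‑null, or lower‑dimensional, subset $\Sigma$ of the domain, and invoke the Lipschitz property of $T$ one last time. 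For $q=2$ this is exactly the argument of Zähle \cite{Zaehle1999}, and the general case only adds the bookkeeping of the simplex $S^{q-1}_+$ and of positive independence of $q$ normals, neither of which introduces a genuinely new difficulty.
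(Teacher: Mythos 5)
Your overall strategy---stratify the domain, prove injectivity on a full-measure stratum, and dispose of the remaining strata by dimension counting---is not the paper's route, and its central step fails. The paper argues by induction on $q$: it factors the (normalized) map as $\tilde G=G_1\circ G_2$, where $G_2$ assembles $X_1,\dots,X_{q-1}$ into their joint unit normal bundle together with $\nor X_q$ and a scale parameter, and $G_1$ performs one further two-set combination; the inductive hypothesis handles $G_2$, and Z\"ahle's $q=2$ result \cite{Zaehle1999}, applied to the intersection $X_1\cap(X_2-z_2)\cap\dots\cap(X_{q-1}-z_{q-1})$ and $X_q$, handles $G_1$. Beyond that, one only needs that the property ``almost every image point has a unique preimage'' survives composition with locally Lipschitz maps (Lipschitz images of $\mathcal{H}^{qd-1}$-null sets are null).

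The genuine gap in your argument is the claim underlying steps (1)--(2): that for $\mathcal{H}^{d-1}$-a.e.\ $(x_i,u_i)\in\nor X_i$ the normal cone $\Nor(X_i,x_i)$ is one-dimensional, so that $u_i$ is determined by $x_i$. This is false precisely for the sets the paper cares most about. For a convex polytope $P$, each face $F$ of dimension $j<d-1$ contributes the set $(\operatorname{relint}F)\times(N(P,F)\cap S^{d-1})$, of full dimension $j+(d-1-j)=d-1$, to $\nor P$, and on this entire set the normal cone has dimension $d-j\ge 2$; for $X$ a single point the normal cone is all of $\Rd$ on the whole bundle. Hence your stratum $\underline{\NK}^{\circ}$ may have measure zero rather than full measure, and knowing $x_1,\dots,x_q$ does not pin down $u_1,\dots,u_q$, so the injectivity argument of step (2) collapses. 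What actually forces a.e.\ uniqueness of the representation $u=\sum_it_iu_i/|\sum_it_iu_i|$ is not one-dimensionality of the normal cones but a general-position (nonosculation) statement: for a.e.\ image point the relevant normal directions span linearly independent subspaces. Establishing this is exactly the content of Z\"ahle's argument, which you invoke for $q=2$ but do not actually deploy for general $q$; repairing your proof would require proving such a statement for $q$ normal cones, which is what the paper's induction accomplishes by reducing to the two-set case.
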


\begin{proof} We write
\[\triangle(q-1):=\left\{(t_1,\ldots,t_q)\in [0,1]^q:\sum_{i=1}^qt_i=1\right\}
\]
for the $(q-1)$-dimensional simplex embedded in $\R^q$. Clearly, to prove
the lemma  it is sufficient to show that the map
\begin{eqnarray*}
G&:&\underline{\NK}\times\triangle(q-1)\to\R^{qd}\times S^{d-1}\,,\\
&&(x_1,u_1,\ldots,x_q,u_q,t_1,\ldots,t_q)\mapsto\left(x_2-x_1,
\ldots,x_q-x_1,x_1,
\frac{\sum_{i=1}^q t_iu_i}{|\sum_{i=1}^qt_iu_i|}\right)\,,
\end{eqnarray*}
has a unique pre-image for $\mathcal{H}^{qd-1}$-almost all elements of 
$\text{im}(G)$. Excluding a set of $\mathcal{H}^{qd-1}$ measure zero from $\text{\rm im}(T)$, 
we see that it is sufficient to consider the
restriction $\tilde{G}$ of $G$ to the subset 
$\underline{\NK}\times\tilde{\triangle}(q-1)$ with
$\tilde{\triangle}(q-1)=\triangle(q-1)\setminus\{ (0,\ldots ,0,1)\}$. 

For the proof we proceed by induction. The case $q=2$ has been
established in \cite{Zaehle1999}. Now we assume that the assertion has already 
been proved for $q-1$ convex bodies. Set
\[\bar{R}^c:=\{(y_1,\ldots,y_{q-1},u,y,v)\in\R^{(q-1)d}\times 
S^{d-1}\times\R^d\times S^{d-1}:o\notin\text{cone}\{u,v\}\}\,.\]
To establish the assertion for $q$ sets $X_1,\ldots,X_q$ with positive
reach, $q\ge 3$, we introduce the maps
\[\varphi_{q}:\R^{qd}\times\R^d\to\R^{qd}\times\R^d\,,\quad 
(x_1,\ldots,x_q,u)\mapsto (x_2-x_1,\ldots,x_{q}-x_1,x_1,u)\,,\]
\begin{eqnarray*}
G_2&:&\underline{\NK}\times\tilde{\triangle}(q-1)\\
&&\qquad\qquad\to([\varphi_{q-1}(\nor(X_1,\ldots,X_{q-1}))
\times\nor X_q]\cap \bar{R}^c)\times(0,\infty)\,,\\
&&(x_1,u_1,\ldots,x_q,u_q,t_1,\ldots,t_q)\\
&&\qquad\qquad\mapsto\left(x_2-x_1,\ldots,x_{q-1}
-x_1,x_1,
\frac{\sum_{i=1}^{q-1} 
t_iu_i}{|\sum_{i=1}^{q-1}t_iu_i|},x_q,u_q,\frac{t_q}{|\sum_{i=1}^{q-1}t_iu_i|}
\right)\,,
\end{eqnarray*}
and 
\begin{eqnarray*}
G_1&:&([\varphi_{q-1}(\nor (X_1,\ldots,X_{q-1}))
\times\nor X_q]\cap \bar{R}^c)\times [0,\infty)
\to\varphi_{q}(\nor (X_1,\ldots,X_q))\,,\\
&&(z_2,\ldots,z_{q-1},x_1,v,x_q,u_q,s)
\mapsto\left(z_2,\ldots,z_{q-1},x_q-x_1,x_1,
\frac{v+su_q}{|v+su_q|}
\right)\,;
\end{eqnarray*}
hence, $\tilde{G}=G_1\circ G_2$. By the inductive hypothesis and since $(q-1)d-1+d=qd-1$, it follows that, for
$\mathcal{H}^{qd-1}$-almost all elements of $\text{im}(G_2)$, the map $G_2$ 
has 
a unique pre-image. Thus, since $G_1$ is locally Lipschitz, the image under 
$G_1$ of the set of all elements of $\text{im}(G_2)$ for which the pre-image 
under $G_2$ is not uniquely determined has $(qd-1)$-dimensional Hausdorff 
measure zero.

Furthermore, for $\mathcal{H}^{qd-1}$-almost all 
\[(z_2,\ldots,z_{q-1},x_1,v,x_q,u_q,s)\in  
([\varphi_{q-1}(\nor(X_1,\ldots,X_{q-1}))
\times\nor(X_q)]\cap \bar{R}^c)\times(0,\infty)\]
we have
\[(x_1,v)\in\nor(X_1\cap(X_2-z_2)\cap\ldots\cap(X_{q-1}-z_{q-1}))\,,\]
and therefore the result in \cite{Zaehle1999} shows that $\mathcal{H}^{qd-1}$-almost 
all elements of $G_1(\text{im}(G_2))$  have a unique pre-image under $G_1$. 
In 
fact, here we use that
\[\frac{v+su_q}{|v+su_q|}=\frac{\frac{1}{s+1}v+\frac{s}{s+1}u_q}
{|\frac{1}{s+1}v+\frac{s}{s+1}u_q|}\]
and $[0,\infty)\to [0,1)$, $s\mapsto(1+s)^{-1}s$, is locally bi-Lipschitz. Thus
the assertion follows.
\end{proof}

We recall now the description of the mixed curvature measures from
\cite{Rataj1997}. Since $T$ is locally Lipschitz, $\nor (X_1,\ldots
,X_q)$ is countably $(qd-1)$-rectifiable. We equip $\nor (X_1,\ldots
,X_q)$ with the orientation given by the unit simple tangent $(qd-1)$ vector field
$a_{X_1,\ldots ,X_q}$ associated with $\nor (X_1,\ldots
,X_q)$ fulfilling
\begin{equation}\label{possmall}
\langle a_{X_1,\ldots ,X_q},\psi_\varepsilon (u)\rangle >0
\end{equation}
for sufficiently small $\varepsilon >0$, where
$$\psi_\varepsilon (u)=\sum_{\substack{0\leq r_1,\ldots ,r_q\leq
d\\r_1+\cdots +r_q\geq (q-1)d}}\varepsilon^{qd-1-r_1-\cdots -r_q}
\varphi_{r_1,\ldots ,r_q}(u).$$
It follows from the proof of Theorem \ref{MiCurv} below that condition \eqref{possmall} 
is satisfied if $\varepsilon>0$ is small enough. 

\medskip

Let $0\leq r_1,\ldots ,r_q\leq d-1$, $r_1+\cdots +r_q\geq (q-1)d$, be
integers. The {\it mixed curvature measure} of $X_1,\ldots ,X_q$ of order
$r_1,\ldots ,r_q$ is a signed Radon measure defined by
\begin{equation} \label{mixed-c-m}
C_{r_1,\ldots,r_q}(X_1,\ldots,X_q;A):=\left[\left(\mathcal{H}^{qd-1}\meares
\nor(X_1,\ldots,X_q)\right)\wedge
a_{X_1,\ldots,X_q}\right](\I_A\varphi_{r_1,\ldots,r_q}) ,
\end{equation}
where $A\subset\R^{qd}\times S^{d-1}$ is a Borel measurable set, provided
that the integral on the right-hand side is well defined. Note that,
since $T$ is only locally Lipschitz, it may happen that $\nor
(X_1,\ldots ,X_q)$ has not locally finite ${\mathcal H}^{qd-1}$ measure.
Therefore, in order that the mixed curvature measures are well defined
as Radon measures, we shall assume that
\begin{equation} \label{bound}
\|C\|_{r_1,\ldots,r_q}(X_1,\ldots,X_q;\cdot)\mbox{ is locally finite
for all }0\leq r_1,\ldots ,r_q\leq d-1,
\end{equation}
where $\|C\|_{r_1,\ldots,r_q}(X_1,\ldots,X_q;\cdot)$ is the total
variation measure corresponding to \eqref{mixed-c-m}. 
In the case $q=2$, this condition has been considered in
\cite{RaZ02}; see also \cite{RaZ01}.  In Remark \ref{noremark} (1) below we explain why \eqref{bound} 
is satisfied whenever $X_1,\ldots,X_q$ are convex sets. Moreover, for 
sets $X_1,\ldots,X_q\subset\R^d$ of positive reach, it is proved in 
Proposition \ref{Prop3.4} that \eqref{bound} holds for $X_1,\rho_2 X_2,\ldots,\rho_q X_q$ 
for almost all rotations $\rho_2,\ldots,\rho_q$.

The mixed curvature measures are symmetric in the sense that for any
permutation $\sigma$ of $\{ 1,\ldots ,q\}$,
\begin{eqnarray*}
\lefteqn{C_{r_{\sigma (1)},\ldots ,r_{\sigma (q)}}(X_{\sigma (1)},\ldots
,X_{\sigma (q)};A_{\sigma (1)}\times\cdots\times A_{\sigma (q)}\times
B)}\hspace{5cm}\\
&=&C_{r_1,\ldots ,r_q}(X_1,\ldots ,X_q;A_1\times\cdots\times A_q\times
B)
\end{eqnarray*}
(see \cite[Proposition~1~(c)]{Rataj1997}). The definition of mixed
curvature measures is extended to arbitrary indices $0\leq r_i\leq d$ by setting
$$C_{d,\ldots ,d,r_{k+1},\ldots ,r_q}(X_1,\ldots ,X_q;\cdot )=
({\mathcal H}^d\meares X_1)\otimes\cdots\otimes ({\mathcal H}^d\meares X_k)\otimes
C_{r_{k+1},\ldots ,r_q}(X_{k+1},\ldots ,X_q;\cdot )$$
for $k=1,\ldots ,q-1$ and $r_{k+1},\ldots ,r_q\leq d-1$, provided that 
$r_{k+1}+\ldots +r_q\geq (q-k-1)d$, and by applying the symmetry.
Consequently, the mixed curvature measures are defined for all integers $0\leq
r_1,\ldots ,r_q\leq d$ with $(q-1)d\leq r_1+\cdots +r_q\leq qd-1$.

We say that the sets $X_1,\ldots ,X_q$ of positive reach {\it osculate} if
there exist $(x,u_i)\in\nor X_i$, $i=1,\ldots ,q$, such that
$o\in\cone\{ u_1,\ldots ,u_q\}$.

As already mentioned in the Introduction, the mixed curvature measures
appear in the translative integral formula for curvature measures of
intersections.

\begin{theorem}[{\cite[Theorem~1]{Rataj1997}}]  
Let $X_1,\ldots ,X_q$ be sets with positive reach in $\Rd$ (for $q\geq 2$)
which satisfy \eqref{bound} and such that
\begin{equation}   \label{NOC}
{\mathcal H}^{(q-1)d}(\{ (z_2,\ldots ,z_q): X_1,X_2+z_2,\ldots ,X_q+z_q\mbox{
osculate}\} )=0.
\end{equation}
Then, for any $k\in\{ 0,1,\ldots ,d-1\}$, the translative formula
\eqref{TIF} holds.
\end{theorem}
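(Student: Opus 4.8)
The plan is to reduce the translative integral formula for curvature measures to an application of the coarea/area formula combined with the identity in Lemma~\ref{Le2.1}. First I would recall that, by definition, the left-hand side of \eqref{TIF} integrates $h(x,x-z_2,\ldots,x-z_q,u)$ against $C_k(\underline{X}(\underline{z});\cdot)$, and that $C_k(Y;\cdot)=(N_Y\meares\I_A)(\varphi_k)$, so the left side is an iterated integral over $(z_2,\ldots,z_q)\in\R^{(q-1)d}$ of the pairing of the normal cycle $N_{\underline X(\underline z)}$ with the pulled-back form $h\,\pi^\#\varphi_k$. The key geometric input is a description of $N_{\underline X(\underline z)}$ in terms of the joint normal bundle $\nor(X_1,\ldots,X_q)$: for almost all $\underline z$, the normal cycle of the intersection is obtained by slicing the current carried by $\nor(X_1,\ldots,X_q)$ (with its orientation $a_{X_1,\ldots,X_q}$) along the fibres of the map $(x_1,\ldots,x_q,u)\mapsto(x_1-x_2,\ldots,x_1-x_q)$; this is exactly where the maps $G_i$ and the condition \eqref{NOC} excluding osculation enter, since osculation is precisely the obstruction to the intersection having positive reach and to the slicing being well behaved.

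Next I would carry out the change of variables. Writing the iterated $z$-integral as integration over $\R^{(q-1)d}$ and using the slicing representation of $N_{\underline X(\underline z)}$, the combined object becomes the pairing of the $(qd-1)$-rectifiable current $(\Ha^{qd-1}\meares\nor(X_1,\ldots,X_q))\wedge a_{X_1,\ldots,X_q}$ with the form
\[
h\cdot G_2^\#\Omega^d\wedge\cdots\wedge G_q^\#\Omega^d\wedge\pi^\#\varphi_k,
\]
where the factors $G_i^\#\Omega^d$ account for the Jacobian/coarea factor produced by integrating out the $(q-1)d$ translation variables (this is the standard mechanism: integrating a pulled-back top form over the base yields the volume form, cf.\ the Weil--Schneider approach in the convex case and \cite{Rataj1997}). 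At this point Lemma~\ref{Le2.1} applies verbatim and rewrites the form as $\sum_{r_1+\cdots+r_q=(q-1)d+k}\varphi_{r_1,\ldots,r_q}$. Since $h$ on $\R^{qd}\times S^{d-1}$ evaluated at the relevant point is just $h(x_1,\ldots,x_q,u)$, pairing term by term and invoking the definition \eqref{mixed-c-m} of the mixed curvature measures immediately yields the right-hand side of \eqref{TIF}. The hypothesis \eqref{bound} guarantees that each of the resulting terms is a genuine signed Radon measure, so that the interchange of summation and integration, and the very definition of the $C_{r_1,\ldots,r_q}$, are legitimate.

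The main obstacle I expect is making the slicing argument rigorous: one has to show that for $\Ha^{(q-1)d}$-almost all $(z_2,\ldots,z_q)$ the slice of the normal cycle of $X_1,\ldots,X_q$ along the level set of $(G_2,\ldots,G_q)$ actually equals the normal cycle $N_{\underline X(\underline z)}$ of the intersection, with the correct orientation. This requires (i) Fubini-type / coarea results for rectifiable currents (Federer's slicing theory), (ii) the fact that for a.e.\ shift the intersection $\underline X(\underline z)$ has positive reach — which follows from \eqref{NOC}, the non-osculation condition — and a local comparison of $\nor(\underline X(\underline z))$ with the slice of $\nor(X_1,\ldots,X_q)$, which in turn rests on the almost-injectivity of the map $T$ established in Lemma~\ref{LemmaRataj} and on the tangent-cone description \eqref{tannor} of the normal bundle, and (iii) a careful sign/orientation bookkeeping to match $a_{\underline X(\underline z)}$ with the induced orientation on the slice; the choice of sign $c_1(d,r_1,\ldots,r_q)$ and the positivity condition \eqref{possmall} are precisely the devices that fix these orientations consistently. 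Once the slicing identity and the orientation matching are in place, the remainder is the computation sketched above together with Lemma~\ref{Le2.1}. I would also remark that this theorem is quoted from \cite[Theorem~1]{Rataj1997}, so in the present paper its proof may simply be cited, with the caveat (already flagged in the excerpt) that the cone in the definition of $\nor(X_1,\ldots,X_q)$ must be the one used here and that the sign in $\varphi_{r_1,\ldots,r_q}$ has been corrected as in the proof of Lemma~\ref{Le2.1}.
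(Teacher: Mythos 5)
The paper does not actually prove this theorem: it is quoted from \cite[Theorem~1]{Rataj1997}, with only the corrections to the cone in the definition of $\nor(X_1,\ldots,X_q)$ and to the sign $c_1$ supplied in the surrounding text, so a citation is all that is required here — as you correctly observe at the end. Your sketch (slicing the current carried by $\nor(X_1,\ldots,X_q)$ along the fibres of $(G_2,\ldots,G_q)$, using the non-osculation condition \eqref{NOC} to identify the slices with the normal cycles $N_{\underline{X}(\underline{z})}$ for almost all shifts, and then applying Lemma~\ref{Le2.1} together with the definition \eqref{mixed-c-m}) faithfully reflects the strategy of the cited proof, so there is nothing to correct.
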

For conditions sufficient for \eqref{NOC}, see \cite{Rataj1997} and \cite{RaZ95}. In particular, 
\eqref{NOC} is satisfied if all sets are convex, or if all sets are sufficiently smooth, or for 
arbitrary sets with positive reach in case $d=2$. Moreover, if  $X_1,\ldots ,X_q$ are arbitrary sets with positive reach in $\Rd$,  
then  $X_1,\rho_2X_2, \ldots,\rho_qX_q$ satisfy \eqref{NOC} for  
almost all rotations $\rho_2,\ldots,\rho_q$ (see \cite[Remark 3.2]{RaZ01}). 

\section{An integral representation of mixed curvature measures}

In this section we derive a representation of mixed curvature measures
as integrals over the product of unit normal bundles of the sets
involved.

\medskip

Let $q\geq 2$ and let $X_1,\ldots ,X_q$ be sets with positive reach.
The principal curvatures and the principal directions of curvature of $X_j$ at $(x_j,u_j)\in\nor X_j$ 
will be denoted by $k_i^{(j)}(x_j,u_j)$ and $a_i^{(j)}(x_j,u_j)$, respectively, for $i=1,\ldots
,d-1$. In the following, we use the short notation 
$$\mbK^{(j)}(x_j,u_j):=\prod_{i=1}^{d-1}\sqrt{1+\big( k_i^{(j)}(x_j,u_j)\big)^2},$$
arguments of the curvature functions will often be omitted if these
will be clear from the context. 
We shall also shortly write $\underline{x}:=(x_1,\ldots,x_q)$ and 
$$\underline{(x,u)}:=(x_1,u_1,\ldots ,x_q,u_q)\in\underline{\NK}.$$
For $\underline{(x,u)}\in R^c$, $s\in \R^q$ and $t\in S^{q-1}_+$, we set
\[\tilde{u}(s):=\sum_{i=1}^qs_iu_i\qquad\text{and}\qquad 
\underline{u}(t):=\frac{\sum_{i=1}^qt_iu_i}{|\sum_{i=1}^qt_iu_i|}.\]
Further, given $\underline{r}=(r_1,\ldots ,r_q)$ with $0\leq r_1,\ldots
,r_q\leq d-1$, $r_1+\ldots +r_q\geq (q-1)d$ and a Borel set $A\subset\R^{qd}\times S^{d-1}$, we define
\[\mu_{\underline{r}}(\underline{(x,u)};A):=\frac{1}{\omega_{d-k}}
\int_{S^{q-1}_+}\I_A(\underline{x},\underline{u}(t))
\prod_{i=1}^qt_i^{d-r_i}\left|\tilde{u}(t)
\right|^{-(d-k)}\mathcal{H}^{q-1}(dt),\]
if $u_1,\ldots,u_q\in S^{d-1}$ are linearly independent (and hence $o\notin \text{cone}\{u_1,\ldots,u_q\}$), and otherwise 
we define 
$\mu_{\underline{r}}(\underline{(x,u)};A):=0$.

\medskip

Now we can state our main result.

\medskip

\begin{theorem}\label{MiCurv}
Let $q,d\ge 2$, let $X_1,\ldots,X_q\subset\Rd$ be sets with
positive reach satisfying \eqref{bound}, let
$r_1,\ldots,r_q\in\{0,\ldots,d-1\}$ with $r_1+\ldots+r_q\ge
(q-1)d$ and put $k:=r_1+\ldots+r_q-(q-1)d$. Further, let $A\subset\R^{qd}\times 
S^{d-1}$ be Borel measurable and bounded. Then
\begin{align} \label{MC}
C_{r_1,\ldots,r_q}(X_1,\ldots,X_q;A)
=&\int_{\underline{\NK}}\mu_{
\underline{r}}(\underline{(x,u)};A)\sum_{\substack{|I_j|=r_j\\ 
j=1,\ldots,q}}\prod_{j=1}^q\frac{\prod_{i\in I_j^c}k_i^{(j)}}{\mbK^{(j)}}
\nonumber \\
&\qquad\times\left|\bigwedge_{j=1}^q\bigwedge_{i\in I_j^c}a_i^{(j)}\wedge
u_1\wedge\ldots\wedge u_q\right|^2\mathcal{H}^{q(d-1)}(d\underline{(x,u)})\,.
\end{align}
\end{theorem}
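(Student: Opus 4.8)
\section*{Proof proposal}

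The plan is to compute the pairing $\big[\big(\mathcal{H}^{qd-1}\meares\nor(X_1,\ldots,X_q)\big)\wedge a_{X_1,\ldots,X_q}\big](\I_A\varphi_{r_1,\ldots,r_q})$ directly via the area formula applied to the parametrization $T$, then identify the resulting integrand with the right-hand side of \eqref{MC}. First I would use Lemma~\ref{LemmaRataj}, which guarantees that $T$ is essentially injective, so that the current on $\nor(X_1,\ldots,X_q)$ is the pushforward under $T$ (with appropriate multiplicity one, up to orientation) of the rectifiable current carried by $\underline{\NK}\times S^{q-1}_+$. Concretely, I would pull back $\varphi_{r_1,\ldots,r_q}$ under $T$ and integrate over $\underline{\NK}\times S^{q-1}_+$ against the pushforward of $\mathcal{H}^{q(d-1)}\times\mathcal{H}^{q-1}$ weighted by the Jacobian-type factor coming from the simple $(qd-1)$-vector tangent to $\underline{\NK}\times S^{q-1}_+$. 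Since $\nor X_j$ has tangent space spanned by the vectors in \eqref{tannor}, a natural spanning simple $(qd-1)$-vector for $\underline{\NK}\times S^{q-1}_+$ at $(\underline{(x,u)},t)$ is
\[
\bigwedge_{j=1}^q\bigwedge_{i=1}^{d-1}\frac{(a_i^{(j)},k_i^{(j)}a_i^{(j)})}{\sqrt{1+(k_i^{(j)})^2}}\ \wedge\ \tau_1\wedge\cdots\wedge\tau_{q-1},
\]
where $\tau_1,\ldots,\tau_{q-1}$ is an orthonormal frame of $T_tS^{q-1}_+$; the normalizing denominators account for the $\mbK^{(j)}$ factors.

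The core computation is evaluating $\langle (dT)(\text{this }(qd-1)\text{-vector}),\varphi_{r_1,\ldots,r_q}(\underline{u}(t))\rangle$. Here I would use the block structure of $T$: the $x$-components of $T$ are the identity, so the differential acts on the $(a_i^{(j)},k_i^{(j)}a_i^{(j)})$ vectors by sending them to $a_i^{(j)}$ in the $j$-th spatial block plus a contribution to the spherical direction through the dependence of $\underline{u}(t)$ on the $u_i$; however $\varphi_{r_1,\ldots,r_q}$ only sees the spatial vectors $a_{\sigma(i)}^{\bullet}$ distributed over the blocks plus a single $u$-slot filled by $\underline{u}(t)\wedge u$-type terms. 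Plugging into the definition of $\varphi_{r_1,\ldots,r_q}$ via the $q$-products $[\,\cdot\,]$, and expanding the shuffle sum $\sum_{\sigma\in\text{Sh}}\text{sgn}(\sigma)$, the selection of which $a_i^{(j)}$ go into the ``$r_j$-block'' versus get contracted against the Hodge stars produces exactly the inner sum $\sum_{|I_j|=r_j}$ over the choices of index sets: the vectors $a_i^{(j)}$ with $i\in I_j$ are the ones ``used up'' to fill the $r_j$ slots, while those with $i\in I_j^c$ contribute their curvature $k_i^{(j)}$ through the second coordinate of \eqref{tannor}. The factor $\big|\bigwedge_j\bigwedge_{i\in I_j^c}a_i^{(j)}\wedge u_1\wedge\cdots\wedge u_q\big|^2$ arises from the identity \eqref{star}, i.e.\ $\langle\alpha\wedge *\alpha,\Omega^d\rangle=|\alpha|^2$, applied in each of the $q$ copies of $\Rd$ after the Hodge-star duality in \eqref{star2} is resolved; this is where the sign bookkeeping encoded by $c_1(d,r_1,\ldots,r_q)$ in the definition of $\varphi_{r_1,\ldots,r_q}$ must combine with the orientation sign from $a_{X_1,\ldots,X_q}$ (fixed by \eqref{possmall}) to yield a nonnegative weight. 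Finally, the $t$-integration, together with the homogeneity of $\underline{u}(t)$ in $t$, the Jacobian of $t\mapsto\underline{u}(t)$, and the factors $t_i^{d-r_i}$, reassembles into $\mu_{\underline{r}}(\underline{(x,u)};A)$: one checks that the product $\prod_i t_i^{d-r_i}$ is precisely the power of $t_i$ produced by collecting the $u_i$'s (each $u_i$ appears $d-r_i$ times among the contracted/spherical slots), and $|\tilde u(t)|^{-(d-k)}$ together with $\omega_{d-k}^{-1}$ comes from normalizing $\underline{u}(t)$ and from the $\omega_{d-k}^{-1}$ already present in $\varphi_{r_1,\ldots,r_q}$.

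I expect the main obstacle to be the careful multilinear-algebra bookkeeping in the core computation: tracking which of the $qd-1$ tangent vectors land in which spatial block under the shuffle sum, verifying that the only surviving terms are those for which exactly $r_j$ of the $a_i^{(j)}$ fill the $j$-th block (so that the remaining $d-1-r_j$ of them, together with $u_j$, must be paired via the Hodge star, forcing the $I_j^c$ pattern and the curvature product $\prod_{i\in I_j^c}k_i^{(j)}$), and — most delicately — confirming that all the accumulated signs (from $\text{sgn}(\sigma)$, from $(-1)^{c_1(d,r_1,\ldots,r_q)}$, from reordering wedge factors across blocks, and from the definition of $*$) cancel to leave the manifestly nonnegative squared norm. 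It is precisely to make this sign come out right that the corrected value of $c_1(d,r_1,\ldots,r_q)$ in Lemma~\ref{Le2.1} is needed, and the verification \eqref{possmall} of the orientation of $a_{X_1,\ldots,X_q}$ should fall out of the $\varepsilon\to 0$ limit of the same computation applied to $\psi_\varepsilon$. A secondary technical point is justifying the use of the area/coarea formula and the essential injectivity of $T$ on the possibly non-$\mathcal{H}^{qd-1}$-rectifiable-with-locally-finite-measure set $\nor(X_1,\ldots,X_q)$ — but assumption \eqref{bound} together with Lemma~\ref{LemmaRataj} handles this, reducing everything to a pointwise identity of integrands that holds $\mathcal{H}^{q(d-1)}$-almost everywhere on $\underline{\NK}$.
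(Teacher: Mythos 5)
Your proposal follows essentially the same route as the paper's proof: parametrize $\nor(X_1,\ldots,X_q)$ by $T$ on $\underline{\NK}\times S^{q-1}_+$, invoke Lemma~\ref{LemmaRataj} and the area formula for currents, push the tangent $(qd-1)$-vector built from \eqref{tannor} through $DT$, expand the shuffle sum in $\varphi_{r_1,\ldots,r_q}$ to isolate the index sets $I_j$, resolve the brackets via \eqref{star}--\eqref{star2} to obtain the squared norm, and check the orientation \eqref{possmall} (which the paper does via $C^{1,1}$ approximation in the flat norm, a detail you only gesture at). The plan is correct and all the key ingredients you identify are exactly those used in Section~4.
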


We postpone the proof of this theorem to the next section and first discuss assumption \eqref{bound} and consider some special cases of Theorem \ref{MiCurv}. 

\begin{remark}\label{noremark} \rm
\begin{enumerate}
\item For convex bodies $K_1,\ldots ,K_q$, condition \eqref{bound} is
always satisfied for the following reason. All the mixed curvature
measures are nonnegative in this case. Therefore they are always
well defined, though possibly not finite. Nevertheless, the translative
formula \eqref{TIF} must be true in this case and since its left-hand
side is clearly locally bounded, the mixed curvature measures on
the right-hand side will be locally bounded as well (cf.\ \cite{RaZ02}).
\item For convex bodies $K_1,K_2\in\K^d$ and $\alpha\in\{1,\ldots,
d-1\}$, the relationship
\[\binom{d}{\alpha}V(K_1[\alpha],K_2[d-\alpha])=C_{\alpha,d-\alpha}
(K_1,-K_2;\R^{2d}\times S^{d-1})\]
is well-known.  A curvature based representation of general mixed volumes is provided 
in \cite{HugHab} and will be  developed further in future work. 
\item By definition and using the preceding notation, we have
\[\left|\bigwedge_{j=1}^q\bigwedge_{i\in I_j^c}a_i^{(j)}\wedge
u_1\wedge\ldots\wedge u_q\right|^2=\left[\text{lin}\{a_i^{(1)}:i\in I_1\},\ldots,
\text{lin}\{a_i^{(q)}:i\in I_q\}\right]^2\,,\]
where the bracket on the right-hand side was already defined in \eqref{star2}; see also 
the references after \eqref{star2} and \cite{WeilPreprint} or \cite[p.~598]{SW08}.
\end{enumerate}
\end{remark}

Note that if \eqref{bound} is not satisfied, then \eqref{MC} might not define a (signed) measure since $\mu_{\underline{r}}$ is not bounded in general. Theorem~\ref{MiCurv} still gives us at least an expression for the (not necessary locally bounded) total variation measures $\| C\|_{r_1,\ldots ,r_q}(X_1,\ldots ,X_q;\cdot )$,
with the integrand on the right-hand side of \eqref{MC} replaced by its 
absolute value. 

To prepare the proof of a condition which ensures that \eqref{bound} is satisfied, we first provide the bounds given in the next lemma. 

\begin{lemma}  \label{L_est}
Let the assumptions of Theorem~\ref{MiCurv} be satisfied with $A=B\times S^{d-1}$, for a Borel set $B\subset\R^{dq}$. 
Then, $\mu_{\underline{r}}(\underline{(x,u)};A)=0$ if $u_1, \ldots, u_q$ are linearly dependent or $\underline{x}\notin B$, and 
\begin{equation}   \label{E_mu}
c_{d,q,k}\;\mu_{\underline{r}}(\underline{(x,u)};A)\leq
\begin{cases}
(1+|\ln(|u_1\wedge\dots\wedge u_q|)|)|u_1\wedge\dots\wedge u_q|^{-1}&\text{\rm if } k=d-q,\\
|u_1\wedge\dots\wedge u_q|^{-(d-k-q+1)}&\text{\rm if } k<d-q,
\end{cases}
\end{equation}
with some constants $c_{d,q,k}$, otherwise. In particular, 
\begin{eqnarray}   
&&\| C\|_{r_1,\ldots ,r_q}(X_1,\ldots ,X_q; B\times S^{d-1})\nonumber\\
&&\qquad \leq
\operatorname{const}\int_{\underline{\NK}}\I_B(\underline{x})
|u_1\wedge\cdots\wedge u_q|^{-(d-q)}\, {\mathcal
H}^{q(d-1)}(d(\underline{(x,u)})).
\label{bound-suff}
\end{eqnarray}
\end{lemma}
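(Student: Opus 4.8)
The plan is to estimate the quantity $\mu_{\underline r}(\underline{(x,u)};A)$ directly from its definition as an integral over the simplex-like set $S^{q-1}_+$, and then substitute the resulting bound into the expression for $\|C\|_{r_1,\ldots,r_q}$ coming from Theorem~\ref{MiCurv}. First I would observe that if $u_1,\ldots,u_q$ are linearly dependent the integrand vanishes by definition, and if $\underline x\notin B$ then $\I_A(\underline x,\underline u(t))=\I_B(\underline x)=0$ for every $t$, so $\mu_{\underline r}(\underline{(x,u)};A)=0$ in both cases; this disposes of the first assertion. So assume $u_1,\ldots,u_q$ are linearly independent and $\underline x\in B$. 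Then $\I_A(\underline x,\underline u(t))=1$ for all $t\in S^{q-1}_+$, and the integral reduces to
\[
\omega_{d-k}\,\mu_{\underline r}(\underline{(x,u)};A)=\int_{S^{q-1}_+}\prod_{i=1}^q t_i^{d-r_i}\,\bigl|\tilde u(t)\bigr|^{-(d-k)}\,\mathcal H^{q-1}(dt),
\]
with $d-k=\sum_{i=1}^q(d-r_i)-(q-1)d+k-k$... more precisely $\sum_{i=1}^q(d-r_i)=qd-(r_1+\cdots+r_q)=d-k$, so the powers $t_i^{d-r_i}$ are nonnegative and their exponents sum to exactly $d-k$. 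Hence $\prod_i t_i^{d-r_i}\le 1$ on $[0,1]^q\supset S^{q-1}_+$, and the whole problem is reduced to bounding $\int_{S^{q-1}_+}|\tilde u(t)|^{-(d-k)}\,\mathcal H^{q-1}(dt)$.

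The key step is therefore the lower bound for $|\tilde u(t)|=\bigl|\sum_{i=1}^q t_i u_i\bigr|$ in terms of $t$ and of $\delta:=|u_1\wedge\cdots\wedge u_q|$, the ``volume'' of the parallelepiped spanned by the $u_i$. Writing $t\in S^{q-1}_+$ and letting $j$ be an index with $t_j=\max_i t_i\ge q^{-1/2}$, one expands $\sum_i t_iu_i = t_j(u_j+\sum_{i\ne j}(t_i/t_j)u_i)$ and estimates the distance of $u_j$ from $\mathrm{lin}\{u_i:i\ne j\}$ from below by a constant multiple of $\delta$ (since $\delta$ equals that distance times the $(q-1)$-dimensional volume spanned by the remaining unit vectors, which is at most $1$); this gives $|\tilde u(t)|\ge c\,t_j\,\delta\ge c'\delta\cdot(\text{something})$, but to get the sharper exponent one keeps track of more than one large coordinate. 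The cleanest route is: after an orthogonal change of coordinates one may write $|\tilde u(t)|^2\ge \sum_{i=1}^q t_i^2 \lambda_i$ where $\lambda_1\ge\cdots\ge\lambda_q>0$ are related to the Gram matrix of $u_1,\ldots,u_q$, with $\lambda_1\cdots\lambda_q=\delta^2$ and each $\lambda_i\le 1$; then $|\tilde u(t)|^{-(d-k)}\le (\sum_i t_i^2\lambda_i)^{-(d-k)/2}$ and one integrates over $S^{q-1}_+$ using that $d-k\le d-q$ in the relevant range (note $r_i\le d-1$ forces $k\le d-q$), splitting the simplex according to which $t_i$ is comparable to $1$ and bounding the remaining integral by a beta-type integral. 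This yields exactly the dichotomy in \eqref{E_mu}: a logarithmic factor in the borderline case $k=d-q$ (where the integral of $t_1^{-(d-k)}=t_1^{-q}$... here the exponent matching produces $\int_0^1 s^{-1}\,ds$ behaviour, hence the $\ln$) and the power $\delta^{-(d-k-q+1)}$ when $k<d-q$.

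Finally, to obtain \eqref{bound-suff}, I substitute \eqref{E_mu} into the formula \eqref{MC} of Theorem~\ref{MiCurv} with the integrand replaced by its absolute value (as noted in the remark preceding the lemma). The curvature factors $\prod_{i\in I_j^c}k_i^{(j)}/\mbK^{(j)}$ are bounded in absolute value by $1$ because $|k_i^{(j)}|/\sqrt{1+(k_i^{(j)})^2}\le 1$ (with the infinite-curvature convention giving value $1$), and the bracket-squared term is bounded by $1$ since it is the squared norm of a wedge of unit-length principal directions and unit vectors $u_i$, which by Hadamard's inequality is at most $1$. Summing over the at most $\binom{d-1}{r_1}\cdots\binom{d-1}{r_q}$ index sets $I_1,\ldots,I_q$ only introduces a dimensional constant. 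Thus the integrand in \eqref{MC} is bounded by $\mathrm{const}\cdot\I_B(\underline x)\,\mu_{\underline r}(\underline{(x,u)};A)$, and in turn—using the bound on $\mu_{\underline r}$ and noting that both $(1+|\ln\delta|)\delta^{-1}$ and $\delta^{-(d-k-q+1)}$ are dominated by $\mathrm{const}\cdot\delta^{-(d-q)}$ on $(0,1]$ (for $k=d-q$ this is $(1+|\ln\delta|)\delta^{-1}\le C\delta^{-(d-q)}$ since $d-q\ge 1$ whenever the logarithmic case can occur, i.e.\ $d\ge q+1$; for $k<d-q$ the exponent $d-k-q+1$ is $\le d-q$)—one arrives at
\[
\|C\|_{r_1,\ldots,r_q}(X_1,\ldots,X_q;B\times S^{d-1})\le \mathrm{const}\int_{\underline{\NK}}\I_B(\underline x)\,|u_1\wedge\cdots\wedge u_q|^{-(d-q)}\,\mathcal H^{q(d-1)}(d(\underline{(x,u)})),
\]
which is \eqref{bound-suff}. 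The main obstacle is the sharp lower bound for $|\sum_i t_iu_i|$ uniformly over the simplex in terms of $|u_1\wedge\cdots\wedge u_q|$, together with the careful bookkeeping of the exponent that produces the precise borderline ($k=d-q$) versus subcritical ($k<d-q$) behaviour; once that elementary but slightly delicate estimate is in place, the rest is a routine application of Theorem~\ref{MiCurv} and Hadamard-type bounds.
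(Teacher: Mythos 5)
Your reduction to bounding $\int_{S^{q-1}_+}|\tilde u(t)|^{-(d-k)}\,\mathcal H^{q-1}(dt)$ and your treatment of the degenerate cases are fine. For \eqref{E_mu} itself you take a different route from the paper, which pushes the integral forward onto the simplex $\Delta_{\underline u}=\operatorname{conv}\{u_1,\ldots,u_q\}$ via $t\mapsto(t_1+\cdots+t_q)^{-1}\tilde u(t)$, whose Jacobian is bounded below by $c\,|u_1\wedge\cdots\wedge u_q|/|\tilde u(t)|$, and then integrates $|z|^{-(d-k-1)}$ over a $(q-1)$-dimensional ball at distance $\rho$ from the origin with $\rho\gtrsim|u_1\wedge\cdots\wedge u_q|$. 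Your Gram-matrix route is plausibly completable, but as written it is only a sketch: the naive lower bound $\sum_i t_i^2\lambda_i\ge\lambda_q$ by the smallest eigenvalue only yields $\delta^{-(d-k-1)}$ (writing $\delta:=|u_1\wedge\cdots\wedge u_q|$), so the multi-scale splitting you allude to is genuinely needed to reach the exponent $d-k-q+1$, and you do not carry it out.

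The real gap is in the deduction of \eqref{bound-suff}. You bound the squared bracket $\bigl|\bigwedge_j\bigwedge_{i\in I_j^c}a_i^{(j)}\wedge u_1\wedge\cdots\wedge u_q\bigr|^2$ by $1$ via Hadamard and then claim that $\delta^{-(d-k-q+1)}\le C\delta^{-(d-q)}$ and $(1+|\ln\delta|)\delta^{-1}\le C\delta^{-(d-q)}$ on $(0,1]$. The first inequality requires $d-k-q+1\le d-q$, i.e.\ $k\ge1$, and fails precisely for $k=0$, where the exponent is $d-q+1$; the second requires $d-q\ge2$ and fails when $k=d-q\in\{0,1\}$. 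The paper avoids this by \emph{not} discarding the bracket: it uses $|\xi\wedge u_1\wedge\cdots\wedge u_q|\le|u_1\wedge\cdots\wedge u_q|$ for a simple unit multivector $\xi$, so the squared bracket contributes an extra factor $\delta^2$, and then $\delta^{2}\cdot\delta^{-(d-k-q+1)}=\delta^{-(d-k-q-1)}\le\delta^{-(d-q)}$ for every $k\ge0$, while $(1+|\ln\delta|)\delta\le C\le C\delta^{-(d-q)}$ in the borderline case. This lost factor is not cosmetic: by \eqref{directcalc}, $|u_1\wedge\cdots\wedge u_q|^{-(d-q)}$ is integrable over the product of spheres but $|u_1\wedge\cdots\wedge u_q|^{-(d-q+1)}$ is not, so with your estimate the application in Proposition~\ref{Prop3.4} would break down exactly in the case $k=0$.
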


\begin{proof}
Assume that $u_1, \ldots, u_q$ are linearly independent and $\underline{x}\in B$. 
Then, from the definition of $\mu_{\underline{r}}(\underline{(x,u)};A)$, we easily get
$$\mu_{\underline{r}}(\underline{(x,u)};A)\leq\frac{1}{\omega_{d-k}}\int_{S^{q-1}_+}|\tilde{u}(t)|^{-(d-k)}\, \mathcal{H}^{q-1}(dt).$$
For the given linearly independent vectors $\underline{u}=(u_1,\dots,u_q)$, let $\Delta_{\underline{u}}$ denote the 
convex hull of $u_1,\dots, u_q$, and 
let $w\in S^{q-1}$ be a unit vector in the linear hull of $u_1,\ldots,u_q$ and perpendicular to $\Delta_{\underline{u}}$. Consider the differentiable, one-to-one mapping
$$h: S^{q-1}_+\to\Delta_{\underline{u}},\quad t\mapsto (t_1+\dots+t_q)^{-1}\tilde{u}(t).$$
In order to compute the Jacobian $J_{q-1}h(t)$ of $h$ at $t\in S^{q-1}_+$, let $\{v_1,\dots,v_{q-1},t\}$ be an orthonormal basis of $\R^q$ and note that
$$Dh_t(v_j)=(t_1+\dots+t_q)^{-1}(v_j^1u_1+\dots+v_j^qu_q)+\alpha_j(t)\tilde{u}(t),\quad j=1,\dots,q-1,$$
for some $\alpha_j(t)\in\R$ and
$$\left|\bigwedge_{j=1}^{q-1}Dh_t(v_j)\wedge \tilde{u}(t)\right|=\frac{|u_1\wedge\dots\wedge u_q|}{(t_1+\dots+t_q)^{q-1}}.$$
Hence,
$$J_{q-1}h(t)=\left|\bigwedge_{i=1}^{q-1}Dh_t(v_j)\right|=\frac{|u_1\wedge\dots\wedge u_q|}{(t_1+\dots+t_q)^{q-1}|\tilde{u}(t)\cdot w|}
\geq \frac{|u_1\wedge\dots\wedge u_q|}{q^{(q-1)/2}|\tilde{u}(t)|},\quad t\in S^{q-1}_+.$$
Since clearly $|\tilde{u}(t)|\geq |h(t)|$, $t\in S^{q-1}_+$,  the area formula implies that
\begin{eqnarray*}
q^{-(q-1)/2}\omega_{d-k}|u_1\wedge\dots\wedge u_q|\mu_{\underline{r}}(\underline{(x,u)};S^{q-1}_+)
&\leq& \int_{\tilde{\Delta}_{\underline{u}}}|z|^{-(d-k-1)}\, \mathcal{H}^{q-1}(dz)\\
&\leq&\int_{B^{(q-1)}_1}(\rho^2+|x|^2)^{-\frac{d-k-1}2}\, \mathcal{H}^{q-1}(dx),
\end{eqnarray*}
where $\rho:=\operatorname{dist}(o,\Delta_{\underline{u}})\le 1$ and $B^{(q-1)}_1$ denotes the unit ball in $\R^{q-1}$ with centre at the origin. The last integral can be bounded from above by
\begin{eqnarray*}
\omega_{q-1}\int_0^1 r^{q-2}(\rho^2+r^2)^{-\frac{d-k-1}2}\, dr
&=&\omega_{q-1}\rho^{-(d-k-1)}\int_0^1 \frac{r^{q-2}}{(1+(\frac r\rho)^2)^{\frac{d-k-1}2}}\, dr\\
&=&\omega_{q-1}\rho^{-(d-k-q)}\int_0^{\rho^{-1}} \frac{s^{q-2}}{(1+s^2)^{\frac{d-k-1}2}}\, ds\\
&\le &\omega_{q-1}\rho^{-(d-k-q)}\left( 1+\int_1^{\rho^{-1}} s^{-(d-k-q+1)}\right)\, ds,
\end{eqnarray*}
and the last integral can be easily evaluated.

The proof of \eqref{E_mu} will be finished by the following estimate. The norm $|u_1\wedge\cdots\wedge u_q|$ is 
equal to the $q$-volume of the parallelepiped spanned by the vectors $u_1,\ldots,u_q$,  
and since these are unit vectors, we get
$$
|u_1\wedge\cdots\wedge u_q|\leq\kappa_{q-1}\frac 2q\rho,
$$
where $\kappa_n$ is the volume of the $n$-dimensional unit ball. 
Applying now Theorem~\ref{MiCurv} and the fact that 
$|\xi\wedge u_1\wedge\dots\wedge u_q|\leq |u_1\wedge\dots\wedge u_q|$ for any simple unit multivector $\xi$, 
we obtain \eqref{bound-suff}.
\end{proof}

Using Lemma \ref{L_est} we now show that mixed curvature measures are defined for generic rotations of 
sets with positive reach (cf.\ \cite[Proposition~4.6]{RaZ01}). Let $\nu_d$ denote the normalized
invariant measure on  the group  $\SOd$ of proper rotations of $\R^d$.

\begin{proposition}\label{Prop3.4}
Let $q,d\geq 2$ and let $X_1,\ldots ,X_q\subset\Rd$ be compact sets with
positive reach. Then \eqref{bound} is satisfied by $X_1,\rho_2
X_2,\ldots ,\rho_qX_q$ for $(\nu_d)^{q-1}$-almost all rotations
$\rho_2,\ldots ,\rho_q\in\SOd$.
\end{proposition}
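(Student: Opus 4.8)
The plan is to apply the sufficient condition \eqref{bound-suff} from Lemma~\ref{L_est} and show that, after integrating over the rotations, the right-hand side becomes finite. Concretely, for fixed bounded Borel $B\subset\R^{qd}$, I would integrate the bound
\[
\|C\|_{r_1,\ldots,r_q}(X_1,\rho_2X_2,\ldots,\rho_qX_q;B\times S^{d-1})
\le\operatorname{const}\int_{\underline{\NK}(\rho)}\I_B(\underline x)\,|u_1\wedge\cdots\wedge u_q|^{-(d-q)}\,{\mathcal H}^{q(d-1)}(d\underline{(x,u)})
\]
with respect to $(\nu_d)^{q-1}(d(\rho_2,\ldots,\rho_q))$, where $\underline{\NK}(\rho)$ is the set built from $\nor X_1,\rho_2\nor X_2,\ldots,\rho_q\nor X_q$. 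Using that $\nor(\rho X)=\{(\rho x,\rho u):(x,u)\in\nor X\}$ and that ${\mathcal H}^{d-1}\meares\nor X$ is left unchanged under the push-forward by the isometry $\rho$, Fubini's theorem reduces the claim to showing that
\[
\int_{\SOd}\cdots\int_{\SOd}\int_{(\nor X_1\times\cdots\times\nor X_q)}\I_B(x_1,\rho_2^{-1}x_2,\ldots)\,|u_1\wedge\rho_2 u_2\wedge\cdots\wedge\rho_q u_q|^{-(d-q)}\;d{\mathcal H}^{q(d-1)}\;d\nu_d(\rho_2)\cdots d\nu_d(\rho_q)<\infty.
\]
Since $X_1,\ldots,X_q$ are compact, the normal bundles carry finite ${\mathcal H}^{d-1}$ measure and the indicator restricts to a set of finite measure, so everything comes down to the integrability of the Jacobian-type singularity $|u_1\wedge\rho_2u_2\wedge\cdots\wedge\rho_qu_q|^{-(d-q)}$ over the rotations.

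For that rotational integrability, the key step is a pointwise estimate: fixing $u_1,\ldots,u_q\in S^{d-1}$ and integrating successively, I expect a bound of the form
\[
\int_{\SOd}\cdots\int_{\SOd}|u_1\wedge\rho_2u_2\wedge\cdots\wedge\rho_qu_q|^{-(d-q)}\,d\nu_d(\rho_2)\cdots d\nu_d(\rho_q)\le c_{d,q}<\infty,
\]
uniformly in $u_1,\ldots,u_q$. This is proved by induction on $q$: conditioning on $\rho_2,\ldots,\rho_{q-1}$, one sets $L:=\operatorname{lin}\{u_1,\rho_2u_2,\ldots,\rho_{q-1}u_{q-1}\}$ (a subspace of dimension $\le q-1$) and writes $|u_1\wedge\cdots\wedge\rho_qu_q|=|u_1\wedge\cdots\wedge\rho_{q-1}u_{q-1}|\cdot\operatorname{dist}(\rho_qu_q,L)$ (assuming the lower-order wedge is nonzero, which holds for $\nu_d$-a.e.\ choice). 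The distribution of $\rho_qu_q$ is the uniform measure on $S^{d-1}$, and the density of $\operatorname{dist}(v,L)$ for $v$ uniform on $S^{d-1}$ and $\dim L=:\ell\le q-1$ behaves like $s^{d-1-\ell-1}$ near $s=0$, so
\[
\int_{S^{d-1}}\operatorname{dist}(v,L)^{-(d-q)}\,{\mathcal H}^{d-1}(dv)\le\operatorname{const}\int_0^1 s^{d-2-\ell}s^{-(d-q)}\,ds=\operatorname{const}\int_0^1 s^{q-\ell-2}\,ds,
\]
which is finite since $\ell\le q-1$ gives exponent $q-\ell-2\ge-1$ — and strictly $>-1$ off a null set of rotations where $\ell<q-1$, while on the boundary case $\ell=q-1$ the exponent is exactly $0$. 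Peeling off one rotation at a time and using the induction hypothesis for the remaining factor (which only increases the exponent, hence stays integrable) closes the argument.

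The main obstacle is the careful bookkeeping in this inductive rotational estimate: one must handle the case where the lower-order wedge $u_1\wedge\rho_2u_2\wedge\cdots\wedge\rho_{q-1}u_{q-1}$ vanishes or is small, i.e.\ where $\dim L<q-1$. When $\dim L=\ell<q-1$ the singularity of $\operatorname{dist}(\rho_qu_q,L)^{-(d-q)}$ is in fact milder (exponent $q-\ell-2>-1$ with room to spare), so integrability is not lost, but to make the induction uniform one wants a bound that does not blow up as the configuration degenerates; this is arranged by always bounding $\operatorname{dist}(\rho_qu_q,L)^{-(d-q)}$ against $\operatorname{dist}(\rho_qu_q,L')^{-(d-q)}$ for a fixed $(q-1)$-dimensional superspace $L'\supseteq L$, for which the integral over $\rho_q$ is a finite constant depending only on $d$ and $q$. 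Combining this uniform rotational bound with the finiteness of ${\mathcal H}^{q(d-1)}$ on the (compact) product normal bundle and Fubini yields the finiteness of the iterated integral, hence \eqref{bound} for $(\nu_d)^{q-1}$-a.e.\ $(\rho_2,\ldots,\rho_q)$, as the integrand is nonnegative.
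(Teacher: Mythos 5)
Your proof follows essentially the same route as the paper's: both invoke \eqref{bound-suff} together with the finiteness of $\mathcal{H}^{q(d-1)}$ on the product of the normal bundles of compact sets, reduce to the finiteness of the iterated integral of $|u_1\wedge\cdots\wedge u_q|^{-(d-q)}$ over independent uniformly distributed directions (averaging over $\SOd$ against $\nu_d$ is the same as letting $\rho_iu_i$ range uniformly over $S^{d-1}$), and then peel off one direction at a time via the base-times-height factorization $|u_1\wedge\cdots\wedge u_q|=|u_1\wedge\cdots\wedge u_{q-1}|\cdot\dist(u_q,L)$. One arithmetic slip should be corrected: for $v$ uniform on $S^{d-1}$ and $\dim L=\ell$, the set $\{\dist(\cdot,L)\le s\}$ is a tube of codimension $d-\ell$ around the $(\ell-1)$-sphere $S^{d-1}\cap L$, so its measure is of order $s^{d-\ell}$ and the density of $s=\dist(v,L)$ behaves like $s^{d-\ell-1}$, not $s^{d-\ell-2}$; with your exponent the generic case $\ell=q-1$ would yield the divergent $\int_0^1 s^{-1}\,ds$, whereas the corrected one gives $\int_0^1 s^{q-\ell-1}\,ds<\infty$ for every $\ell\le q-1$ (exponent exactly $0$ when $\ell=q-1$, which is what you in fact assert at the end, inconsistently with your stated density). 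This is exactly the paper's closed-form computation \eqref{directcalc}, applied to the subspace $L^\perp$ of dimension $d-\ell$ with $p=-(d-q)$, whose finiteness condition $p+(d-\ell)>0$ reads $\ell<q$. Your additional device of dominating $\dist(v,L)^{-(d-q)}$ by $\dist(v,L')^{-(d-q)}$ for a fixed $(q-1)$-dimensional superspace $L'\supseteq L$ (valid since $d\ge q$ is forced by $r_i\le d-1$ and $r_1+\cdots+r_q\ge(q-1)d$) is a clean way to make the estimate uniform over degenerate configurations, which the paper handles implicitly through the exact formula.
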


\begin{proof}
Due to \eqref{bound-suff}, 
it is sufficient to show that
$$\int_{S^{d-1}}\cdots\int_{S^{d-1}}
|u_1\wedge\cdots\wedge u_q|^{-(d-q)} \,{\mathcal H}^{d-1}(du_q)\ldots {\mathcal
H}^{d-1}(du_2)<\infty$$
for any $u_1\in S^{d-1}$. First, observe that
$$|u_1\wedge\cdots\wedge u_q|=|u_1\wedge\cdots\wedge u_{q-1}|
|p_Vu_q|,$$
where $V$ is the orthogonal complement to the linear hull of
$u_1,\ldots ,u_{q-1}$ (note that $\dim V=d-q+1$) and $p_V$ denotes the orthogonal projection to
$V$. Moreover, a direct calculation shows that if $L$ is an $l$-dimensional linear subspace of $\R^d$,  $l\in\{1,\ldots,d-1\}$, 
and $p+l>0$, then
\begin{align}\label{directcalc}
&\int_{S^{d-1}}|p_Lu|^{p}\, {\mathcal H}^{d-1}(du)\nonumber\\
&\qquad= \int_{S^{d-1}\cap L}\int_{S^{d-1}\cap L^\perp}\int_0^{\frac{\pi}{2}} (\cos t)^{l-1}(\sin t)^{d-l-1}(\cos t)^p\, dt\, \mathcal{H}^{d-l-1}(dx)\, \mathcal{H}^{l-1}(dy) \nonumber\\
&\qquad=\omega_{l}\omega_{d-l} \frac{\Gamma\left(\frac{d-l}{2}\right)\Gamma\left(\frac{p+l}{2}\right)}{2\,\Gamma\left(\frac{d+p}{2}\right)}.
\end{align}
Applying \eqref{directcalc} with $L=V$, $l=d-q+1$ and $p=-d+q$ in a first step, it remains to be shown that 
$$\int_{S^{d-1}}\cdots\int_{S^{d-1}}
|u_1\wedge\cdots\wedge u_{q-1}|^{-(d-q)} \,{\mathcal H}^{d-1}(du_{q-1})\ldots {\mathcal
H}^{d-1}(du_2)<\infty.$$
Repetition of the preceding argument yields the assertion. 
\end{proof}
 
\medskip

Theorem \ref{MiCurv} can be specified in various ways. 
First, let $X_1,\ldots,X_q$ be sets with positive reach 
and $r_1=\cdots=r_q=d-1$ with $q,d\ge 2$. Then $k=d-q$ and 
\[\mu_{\underline{d-1}}(\underline{(x,u)};A)=\frac{1}{\omega_{q}}
\int_{S^{q-1}_+}\I_A(\underline{x},\underline{u}(t))\, 
t_1\cdots t_q \left|\tilde{u}(t)
\right|^{-q}\mathcal{H}^{q-1}(dt),\]
if $u_1,\ldots,u_q\in S^{d-1}$ are linearly independent, and zero otherwise. Furthermore,
\begin{align*}
C_{d-1,\ldots,d-1}(X_1,\ldots,X_q;A)&=2^q\int\ldots\int \mu_{\underline{d-1}}(\underline{(x,u)};A)\,|u_1\wedge\ldots\wedge u_q|^2\, \\
&\qquad\qquad 
\times \,C_{d-1}(X_q;d(x_q,u_q))\ldots C_{d-1}(X_1;d(x_1,u_1)).
\end{align*}

The special case where $X_1,\ldots,X_q$ are convex
polytopes but $r_1,\ldots,r_q\in\{0,\ldots,d-1\}$ are arbitrary, is of particular interest, 
since it shows that the representation of 
mixed curvature measures given in Theorem \ref{MiCurv} extends the defining 
relationship (3.1) in \cite{WeilPreprint} in a natural way. 

For a polytope $P\subset\R^d$ and $j\in\{0,\ldots,d-1\}$, 
we write $\mathcal{F}_j(P)$ for the set of all $j$-dimensional faces of $P$ (see \cite[p.~16]{Schneider}), and $N(P,F)$ for 
the normal cone of $P$ at a face $F$ of $P$ (see \cite[p.~83]{Schneider}). 
For faces $F_i\in \mathcal{F}_{r_i}(P_i)$,  $i=1,\ldots,q$, 
the bracket $\LK F_1,\ldots,F_q\RK$ is defined as in \cite{WeilPreprint} or \cite[p.~598]{SW08}.

\begin{corollary}\label{CorPoly}
Let $q,d\ge 2$, and let $P_1,\ldots,P_q$ be convex polytopes (or polyhedral sets).   
Let $r_1,\ldots,r_q\in\{0,\ldots,d-1\}$ 
with $r_1+\ldots+r_q\ge 
(q-1)d$ and $k:=r_1+\ldots+r_q-(q-1)d$. Further, let $B\subset\R^{qd}$ and 
$C\subset S^{d-1}$ be Borel measurable sets. Then
\begin{eqnarray*}
\lefteqn{C_{r_1,\ldots,r_q}(P_1,\ldots,P_q;B\times C)}\hspace{1cm}\\
&=&\sum_{F_1\in\mathcal{F}_{r_1}(P_1)}\ldots \sum_{F_q\in\mathcal{F}_{r_q}(P_q)}
\frac{\mathcal{H}^{d-1-k}\left(\left(\sum_{i=1}^q N(P_i,F_i)\right)\cap 
C\right)}{\omega_{d-k}}\\
&&\hspace{4cm}\times\,\LK F_1,\ldots,F_q\RK\left(
\otimes_{i=1}^q\left(\mathcal{H}^{r_i}\meares F_i\right)\right)(B)\,.
\end{eqnarray*}
\end{corollary}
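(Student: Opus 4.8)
The plan is to derive Corollary~\ref{CorPoly} from Theorem~\ref{MiCurv} by specializing the general integral representation to the polyhedral situation, where the normal bundle degenerates into a finite union of product pieces indexed by faces. First I would recall that for a polytope $P$ the unit normal bundle decomposes as
\[
\nor P=\bigcup_{j=0}^{d-1}\bigcup_{F\in\mathcal{F}_j(P)}
\bigl(\operatorname{relint}F\times\bigl(N(P,F)\cap S^{d-1}\bigr)\bigr),
\]
up to an $\mathcal{H}^{d-1}$-null set (the boundaries of faces), and that on the piece corresponding to a face $F\in\mathcal{F}_j(P)$ the generalized principal curvatures satisfy $k_i=0$ for $i=1,\ldots,j$ (the directions tangent to $F$) and $k_i=\infty$ for $i=j+1,\ldots,d-1$ (the directions into the normal cone). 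With the conventions $\frac{1}{\sqrt{1+\infty^2}}=0$, $\frac{\infty}{\sqrt{1+\infty^2}}=1$ from the preliminaries, the factor $\mathbb{K}^{(j)}$ is (in the limiting sense) carried entirely by the infinite curvatures, and the disintegration $\mathcal{H}^{d-1}\meares\nor P$ on this piece factors as $(\mathcal{H}^{j}\meares F)\otimes(\mathcal{H}^{d-1-j}\meares(N(P,F)\cap S^{d-1}))$.

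Next I would substitute this into \eqref{MC}. On the piece indexed by $(F_1,\ldots,F_q)$ with $F_j\in\mathcal{F}_{m_j}(P_j)$, the inner sum over the index sets $I_j$ with $|I_j|=r_j$ collapses: the product $\prod_{i\in I_j^c}k_i^{(j)}/\mathbb{K}^{(j)}$ is the product over the complement $I_j^c$ (of size $d-r_j$) of $k_i^{(j)}/\sqrt{1+(k_i^{(j)})^2}$, which in the limit equals $1$ precisely when $I_j^c$ consists entirely of directions with infinite curvature, i.e. exactly when $I_j$ equals the set of tangent directions of $F_j$; this forces $m_j=r_j$, so only faces of the correct dimension $r_j$ contribute, and for those the curvature product equals $1$ and $a_i^{(j)}$ for $i\in I_j^c$ spans the linear hull of $N(P_j,F_j)$. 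Hence the wedge-square factor becomes $|\bigwedge_{j=1}^q(\text{lin }N(P_j,F_j))\wedge u_1\wedge\cdots\wedge u_q|^2$, which by Remark~\ref{noremark}(3) and the definition of the bracket is exactly (the square of) what gives rise to $\LK F_1,\ldots,F_q\RK$. Carrying out the $t$-integral in $\mu_{\underline{r}}$ against the remaining spherical directions $u_j\in N(P_j,F_j)\cap S^{d-1}$, the map $T$ sends $\underline{\NK}\times S^{q-1}_+$ onto the set of $(\underline{x},u)$ with $u\in(\sum_i N(P_i,F_i))\cap S^{d-1}$, and — using Lemma~\ref{LemmaRataj} together with the explicit form of the Jacobian computation already performed in the proof of Lemma~\ref{L_est} (the factor $|u_1\wedge\cdots\wedge u_q|$ versus $|\tilde u(t)|$) — the combination of $\mu_{\underline{r}}$, the wedge-square, and the product of $r_j$-Hausdorff measures on the faces reassembles into $\omega_{d-k}^{-1}\mathcal{H}^{d-1-k}((\sum_i N(P_i,F_i))\cap C)\cdot\LK F_1,\ldots,F_q\RK\cdot(\otimes_i\mathcal{H}^{r_i}\meares F_i)(B)$. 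Summing over all face tuples gives the stated formula; note that $r_j=d-1$ is allowed since then $N(P_j,F_j)$ is a ray and $\mathcal{H}^{d-1-j}$ on it is counting-type, consistent with the convention fixing the $C_{d-1}$ case.

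The main obstacle I expect is making the passage to the infinite-curvature limit rigorous: Theorem~\ref{MiCurv} is stated for sets of positive reach, whereas polyhedra have reach zero, so one cannot simply plug in. The cleanest route is to approximate each $P_i$ by an inner parallel body or a smoothing $P_i^{(\varepsilon)}$ of positive reach (or to rescale the normal directions), apply Theorem~\ref{MiCurv} to the approximants, and pass to the limit $\varepsilon\to 0$; here one must control the total variation bound \eqref{bound-suff} uniformly in $\varepsilon$ and identify the limits of both sides, the right-hand side converging precisely to the face decomposition above. Alternatively — and this is likely what the paper does — one verifies that the defining current integral \eqref{mixed-c-m} for polytopes localizes on each product piece $\operatorname{relint}F_j\times(N(P_j,F_j)\cap S^{d-1})$, evaluates the differential form $\varphi_{r_1,\ldots,r_q}$ directly on the (constant, simple) tangent vectors of that piece using the definition of $\varphi_{r_1,\ldots,r_q}$ in terms of the $p$-product, and recognizes the resulting constant as $\omega_{d-k}^{-1}\LK F_1,\ldots,F_q\RK$ times the appropriate $\mathcal{H}^{d-1-k}$-density; the bookkeeping of signs (via $c_1(d,r_1,\ldots,r_q)$) and the orientation condition \eqref{possmall} is then the delicate part, but it parallels Lemma~\ref{Le2.1} and the scalar case $q=1$ where $\varphi_r$ is the Lipschitz–Killing form.
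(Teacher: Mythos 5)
Your derivation follows the paper's proof essentially verbatim: specialize Theorem~\ref{MiCurv} to polytopes, where on each product piece $\operatorname{relint}F_j\times(N(P_j,F_j)\cap S^{d-1})$ the sum over index sets $I_j$ collapses onto the tangent/normal splitting of the face (so only faces with $\dim F_j=r_j$ survive and the curvature product equals $1$), the wedge factor becomes $\LK F_1,\ldots,F_q\RK$, and the remaining integral of $\mu_{\underline{r}}$ over $(N(P_1,F_1)\cap S^{d-1})\times\cdots\times(N(P_q,F_q)\cap S^{d-1})\times S^{q-1}_+$ is converted into $\omega_{d-k}^{-1}\mathcal{H}^{d-1-k}\bigl(\bigl(\sum_i N(P_i,F_i)\bigr)\cap C\bigr)$ by the area formula applied to $(u_1,\ldots,u_q,t)\mapsto\underline{u}(t)$, whose Jacobian is exactly $|\tilde{u}(t)|^{-(d-k)}\prod_j t_j^{d-1-r_j}\,\bigl|\bigwedge_j\bigwedge_{i\in I_j^c}a_i^{(j)}\wedge u_1\wedge\cdots\wedge u_q\bigr|$; the degenerate case of linearly dependent normal cones is disposed of by $\LK F_1,\ldots,F_q\RK=0$. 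This is precisely the paper's argument.

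The one point to correct is your closing paragraph: the ``main obstacle'' does not exist. A closed convex set has \emph{infinite} reach (metric projection onto a convex set is everywhere single-valued), so convex polytopes are squarely within the scope of Theorem~\ref{MiCurv}; condition \eqref{bound} holds by Remark~\ref{noremark}(1), and the conventions $1/\sqrt{1+\infty^2}=0$, $\infty/\sqrt{1+\infty^2}=1$ already built into the definition of the generalized curvatures handle the $0$/$\infty$ dichotomy on the faces. No smoothing, inner parallel bodies, or flat-norm approximation is needed, and no separate evaluation of the current \eqref{mixed-c-m} is required --- one really does ``simply plug in.''
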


\medskip

In particular, Corollary \ref{CorPoly} is an extension of the defining relation 
 in \cite[(3.1)]{WeilPreprint}, since 
\[\gamma(F_1,\ldots,F_q;P_1,\ldots,P_q)=\frac{
\mathcal{H}^{d-1-k}\left(\left(\sum_{i=1}^q N(P_i,F_i)\right)
\cap S^{d-1}\right)
}{\omega_{d-k}}
\,,\]
provided that $\text{lin } N(P_1,F_1),\ldots,\text{lin } N(P_q,F_q)$ are
linearly independent subspaces. Also note that if these subspaces are not linearly independent, 
then 
$$
\text{dim}\left(\left(\sum_{i=1}^q N(P_i,F_i)\right)
\cap S^{d-1}\right)<d-r_1+\ldots+d-r_q-1=d-k-1,
$$
and hence $\gamma(F_1,\ldots,F_q;P_1,\ldots,P_q)=0$ in this case.
\medskip

\begin{proof} We continue to use the previous notation. Under the present special 
assumptions, the formula of Theorem 
\ref{MiCurv} yields
\begin{align*}
C_{r_1,\ldots,r_q}(P_1,\ldots,&P_q;B\times C)\\
=&\sum_{F_1\in\mathcal{F}_{r_1}(P_1)}\ldots \sum_{F_q\in\mathcal{F}_{r_q}(P_q)}
\LK F_1,\ldots,F_q\RK\left(
\otimes_{i=1}^q\left(\mathcal{H}^{r_i}\meares F_i\right)\right)(B)\\
&\qquad\times \frac{1}{\omega_{d-k}}\int_{S^{q-1}_+}\int_{N(P_1,F_1)\cap S^{d-1}}\ldots
\int_{N(P_q,F_q)\cap S^{d-1}}\left(\prod_{j=1}^qt_j^{d-1-r_j}\right)\\
&\qquad\times
\left|\bigwedge_{j=1}^q\bigwedge_{i\in I_j^c}a_i^{(j)}\wedge
u_1\wedge\ldots\wedge u_q\right|\I_C(\underline{u}(t))
|\tilde{u}(t)|^{-(d-k)}\\
&\qquad\times\mathcal{H}^{d-1-r_q}(du_q)\ldots\mathcal{H}^{d-1-r_1}
(du_1)\,\mathcal{H}^{q-1}(dt)\,,
\end{align*}
where $\{a_i^{(j)}:i\in I_j^c\}$ is an orthonormal basis of
$\text{Tan}(N(P_j,F_j)\cap S^{d-1},u_j)$ and $\{a_i^{(j)}:i\in I_j\}$ spans
$\text{lin}(F_j-F_j)$, $j=1,\ldots,q$. Here we adopt the convention that 
the integrand is zero if $u_1,\ldots,u_q$ are linearly dependent. 

Let $F_j\in \mathcal{F}_{r_j}(P_j)$, 
$j=1,\ldots,q$, be fixed and assume that $\text{lin } 
N(P_1,F_1),\ldots,\text{lin } N(P_q,F_q)$ are linearly independent. Consider the bijective 
map
\begin{align*}
T:(N(P_1,F_1)\cap S^{d-1})\times\ldots\times (N(P_q,F_q)\cap S^{d-1})\times S^{q-1}_+&\to
\left(\sum_{i=1}^q N(P_i,F_i)\right)
\cap S^{d-1}\,,\\
(u_1,\ldots,u_q,t)&\mapsto \underline{u}(t)\,.
\end{align*}
Then the required equality for any such summand follows by an application of the area 
formula once we have checked that
\[J_{d-k-1}T(\underline{u},t)=|\tilde{u}(t)|^{-(d-k)}
\left(\prod_{j=1}^qt_j^{d-1-r_j}\right)
\left|\bigwedge_{j=1}^q\bigwedge_{i\in I_j^c}
a_i^{(j)}\wedge
u_1\wedge\ldots\wedge u_q\right|\,.\]
In fact, using the previous notation, we find that
\[\frac{\partial T}{\partial 
a_i^{(j)}}(\underline{u},t)=\frac{t_ja_i^{(j)}}{|\tilde{u}(t)|}+\lambda_i^{(j)}\underline
{u}(t)\,,\]
where $i\in I_j^c$, $j\in\{1,\ldots,q\}$, and $\lambda_i^{(j)}\in\R$,
\[\frac{\partial T}{\partial f_l}(\underline{u},t)=\frac{\tilde{u}(f_l)}{
|\tilde{u}(t)|}+\lambda_l\underline{u}(t)\,,\]
where $l\in \{1,\ldots,q-1\}$ and $\lambda_l\in\R$, and
\[\left\langle\frac{\partial T}{\partial 
a_i^{(j)}}(\underline{u},t),\underline{u}(t)\right\rangle=\left\langle\frac{
\partial T}{\partial f_l}(\underline{u},t),\underline{u}(t)\right\rangle=0\,.\]
Here $f_1,\ldots,f_{q-1},t$ is an orthonormal basis of $\R^q$. Thus 
\begin{align*}
J_{d-k-1}T(\underline{u},t)&=\left|\bigwedge_{j=1}^q\bigwedge_{i\in I_j^c}
\left(\frac{t_ja_i^{(j)}}{|\tilde{u}(t)|}+\lambda_i^{(j)}\underline{u}(t)\right)\wedge
\bigwedge_{i=1}^{q-1}\left(\frac{\tilde{u}(f_i)}{
|\tilde{u}(t)|}+\lambda_i\underline{u}(t)\right)\right|\\
&=\left|\bigwedge_{j=1}^q\bigwedge_{i\in I_j^c}
\left(\frac{t_ja_i^{(j)}}{|\tilde{u}(t)|}\right)\wedge
\bigwedge_{i=1}^{q-1}\left(\frac{\tilde{u}(f_i)}{
|\tilde{u}(t)|}\right)\wedge\underline{u}(t)\right|\\
&=|\tilde{u}(t)|^{-(d-k)}\left|\bigwedge_{j=1}^q\bigwedge_{i\in I_j^c}
(t_ja_i^{(j)})\wedge\bigwedge_{i=1}^{q-1}\tilde{u}(f_i)\wedge\tilde{u}(t)\right|\,,
\end{align*}
from which the formula for the Jacobian immediately follows.

If $\text{lin } N(P_1,F_1),\ldots,\text{lin } N(P_q,F_q)$ are not linearly independent, then 
$\LK F_1,\ldots,F_q\RK=0$, and thus the requested equality is also true in this case.

\end{proof}

Further representation formulas,  which are needed for the analysis of Boolean models  
in stochastic geometry can be derived from Theorem \ref{MiCurv} and Corollary \ref{CorPoly}. 
Various examples of such results and their applications are provided in \cite{GW02}, \cite[Section 3]{HoermannDiss} and \cite{HHKM}.

\section{Proof of Theorem~\ref{MiCurv}}

For given $t\in S^{q-1}_+$ we denote by $f_1,\ldots,f_{q-1},t$ an
orthonormal basis of $\R^q$ whose orientation is chosen in such a way that
\[\det(f_1,\ldots,f_{q-1},t)=(-1)^{(d-1)\binom{q}{2}}\,.\]
Denote
\begin{eqnarray*}
\tilde{a}(\underline{(x,u)},t)&=&
\left(\wedge_{d-1}\Pi_1\right)a_{X_1}(x_1,u_1)\wedge\cdots\wedge
\left(\wedge_{d-1}\Pi_q\right)a_{X_q}(x_q,u_q)\\ 
&&\hspace{3cm}\wedge\left(\wedge_{q-1}\Pi_{q+1}\right)(f_1\wedge\cdots\wedge f_{q-1}),
\end{eqnarray*}
where $\Pi_i$ are the canonical embeddings into $(\R^{2d})^q\times\R^{q-1}$ such that
$$(a_1,\ldots,a_q,a_{q+1})=\sum_{i=1}^{q+1}\Pi_ia_i;$$
note that $\tilde{a}(\underline{(x,u)},t)$ is a $(qd-1)$ vector
field tangent to $\underline{\NK}\times S^{q-1}$. Using the area
formula for currents \cite[\S4.1.30]{F69} and Lemma~\ref{LemmaRataj},
we obtain
\begin{eqnarray*}
&&C_{r_1,\ldots,r_q}(X_1,\ldots,X_q;A)\\
&&\qquad=\int_{\underline{\NK}}\int_{S^{q-1}_+}
\left\langle\wedge_{qd-1}\text{ap
}DT(\underline{(x,u)},t)\tilde{a}(\underline{(x,u)},t),\varphi_{r_1,\ldots,r_q}
(\underline{u}(t))\right\rangle\\
&&\hspace{4cm}\times\,\I_A(\underline{x},\underline{u}(t))\,\mathcal{H}^{q-1}(dt)\,
\mathcal{H}^{q(d-1)}(d\underline{(x,u)})\,,
\end{eqnarray*}
provided that the orientation is chosen properly, i.e., that
$\wedge_{qd-1}\text{ap}DT(\underline{(x,u)},t)\tilde{a}(\underline{(x,u)},t)$
is a positive multiple of $a_{X_1,\ldots ,X_q}$; this will be verified
later.

A direct calculation shows that, for $\mathcal{H}^{qd-1}$-almost all 
$(\underline{(x,u)},t)\in\underline{\NK}\times S^{q-1}_+$,
\begin{align*}
\wedge_{qd-1}\text{ap }DT \tilde{a}(\underline{(x,u)},t)
=&\phantom{\wedge}\frac{1}{\mbK_1}\bigwedge_{i=1}^{d-1}\left(a_i^{(1)},o,
\ldots,o,\frac{t_1k_i^{(1)}}{|\tilde{u}(t)|}a_i^{(1)}+\lambda_i^{(1)}
\tilde{u}(t)
\right)\wedge
\\
&\quad\vdots\\
&\wedge\frac{1}{\mbK_q}\bigwedge_{i=1}^{d-1}\left(o,\ldots,o,a_i^{(q)},
\frac{t_qk_i^{(q)}}{|\tilde{u}(t)|}a_i^{(q)}+\lambda_i^{(q)}\tilde{u}
(t)\right)\\
&\wedge\bigwedge_{j=1}^{q-1}\left(o,\ldots,o,\frac{\tilde{u}(f_j)}{
|\tilde{u}(t)|}+\lambda_j\tilde{u}(t)\right)\,,
\end{align*}
where $\lambda_i^{(j)}$, $i\in\{1,\ldots,d-1\}$ and $j\in\{1,\ldots,q\}$, and
$\lambda_j$, $j\in\{1,\ldots,q-1\}$, are suitably chosen. We write
$\text{Sh}^*(r_1,\ldots,r_{q+1})$ for the set of all $\sigma\in 
\text{Sh}(r_1,\ldots,r_{q+1})$ which satisfy
\begin{eqnarray*}
\sigma(\{1,\ldots,R_1\})&\subset&\{1,\ldots,d-1\}\\
&\vdots&\\
\sigma(\{R_{q-1}+1,\ldots,R_q\})&\subset&\{(q-1)(d-1)+1,\ldots,q(d-1)\}\,,
\end{eqnarray*}
and then we define
$$I_\sigma(i)=\sigma(\{R_{i-1}+1,\ldots,R_i\}) -(i-1)(d-1)$$
and
$$I_\sigma(i)^c=\{ 1,\ldots ,d-1\}\setminus I_\sigma (i),\quad
i=1,\ldots ,q,$$
for $\sigma\in\text{Sh}^*(r_1,\ldots,r_{q+1})$. By $I_\sigma
(j)I_\sigma (j)^c$ we shall denote the permutation of $\{ 1,\ldots
,d-1\}$ mapping the first $r=|I_\sigma (j)|$ elements increasingly on
$I_\sigma (j)$ and the remaining $d-1-r$ elements increasingly  on
$I_\sigma (j)^c$.
Thus we arrive at
\begin{align}
&\left\langle\wedge_{qd-1}\text{ap }DT 
\tilde{a}(\underline{(x,u)},t),\varphi_{r_1,\ldots,r_q}(\underline{u}(t))
\right\rangle
=\frac{1}{\omega_{d-k}}(-1)^{c_1(d,r_1,\ldots,r_q)}  \nonumber\\
&\qquad \times\sum_{\sigma\in\text{Sh}^*(r_1,\ldots,r_{q+1})}\text{sgn}(\sigma)\left(\prod_{
j=
1}^q t_j^{d-1-r_j}\right)\prod_{j=1}^q\frac{\prod_{i\in 
I_\sigma(j)^c}k_i^{(j)}}{\mbK_j}|\tilde{u}(t)|^{-(d-k)}\nonumber\\
&\qquad\times\left[\bigwedge_{i\in I_\sigma(1)}a_i^{(1)},\ldots,\bigwedge_{i\in
I_\sigma(q)}a_i^{(q)},\bigwedge_{i\in
I_\sigma(1)^c}a_i^{(1)}\wedge\ldots\wedge\bigwedge_{i\in I_\sigma(q)^c}a_i^{(q)}
\wedge\bigwedge_{i=1}^{q-1}\tilde{u}(f_i)\wedge\tilde{u}(t)\right]\,.
\label{A4.5}
\end{align}
Observe that
\begin{equation}
\bigwedge_{i=1}^{q-1}\tilde{u}(f_i)\wedge\tilde{u}(t)=
\det(f_1,\ldots,f_{q-1},t)\,\,u_1\wedge\ldots\wedge u_q,
\end{equation}
\begin{equation}
\text{sgn}(\sigma)=\left(\prod_{j=1}^q\text{sgn}(I_\sigma(j)I_\sigma(j)^c)
\right)
(-1)^{c_2(d,r_1,\ldots,r_q)}
\end{equation}
with
\begin{eqnarray*}
&&c_2(d,r_1,\ldots ,r_q)\\
&&\qquad=\sum_{j=1}^q (d-1-r_j)(R_q-R_j)\\
&&\qquad\sim q(d-1)R_q+(d-1)\sum_{i=1}^qR_i+R_q\sum_{i=1}^qr_i+\sum_{i=1}^qr_iR_i\\
&&\qquad\sim q(d-1)R_q+(d-1)\left((q+1)\sum_{i=1}^q r_i-\sum_{i=1}^q ir_i\right)+R_q+\sum_{i=1}^qr_i+\sum_{1\le i<j\le q}r_ir_j\\
&&\qquad\sim (d-1)\sum_{i=1}^q r_i+(d-1)\sum_{i=1}^q ir_i+\sum_{1\le i<j\le q}r_ir_j,
\end{eqnarray*}
$$*\left(\bigwedge_{i\in I_\sigma (j)}a_i^{(j)}\right) =
\text{sgn}(I_\sigma(j)I_\sigma(j)^c)\ \bigwedge_{i\in I_\sigma (j)^c}a_i^{(j)}
\wedge u_j$$
and
$$\bigwedge_{j=1}^q(\bigwedge_{i\in I_\sigma (j)^c}a_i^{(j)}\wedge u_j)
=(-1)^{c_3}\bigwedge_{j=1}^q\bigwedge_{i\in I_\sigma(q)^c}a_i^{(j)}
\wedge\bigwedge_{i=1}^qu_i$$
with
$$c_3=\sum_{j=2}^q(j-1)(d-1-r_j)\sim (d-1)\binom{q}{2}+\sum_iir_i+\sum_ir_i.$$
Using \eqref{star2}, we thus have
\begin{align}
&\text{sgn}\,\sigma\left[\bigwedge_{i\in I_\sigma(1)}a_i^{(1)},\ldots,\bigwedge_{i\in
I_\sigma(q)}a_i^{(q)},\bigwedge_{i\in
I_\sigma(1)^c}a_i^{(1)}\wedge\ldots\wedge\bigwedge_{i\in I_\sigma(q)^c}a_i^{(q)}
\wedge\bigwedge_{i=1}^{q-1}\tilde{u}(f_i)\wedge\tilde{u}(t)\right]
\nonumber\\
&\quad=(-1)^{c_2}\left\langle\bigwedge_{j=1}^q(\bigwedge_{i\in I_\sigma
(j)^c}a_i^{(j)}\wedge u_j)\wedge
*\left(\bigwedge_{j=1}^q\bigwedge_{i\in I_\sigma(j)^c}a_i^{(j)}
\wedge\bigwedge_{i=1}^{q-1}\tilde{u}(f_i)\wedge\tilde{u}(t)\right)
,\Omega^d\right\rangle\nonumber\\
&\quad=(-1)^{c_2+c_3+(d-1)\binom{q}{2}}
\left\langle\bigwedge_{j=1}^q\bigwedge_{i\in I_\sigma (j)^c}a_i^{(j)}\wedge
\bigwedge_{j=1}^qu_j\wedge
*\left(\bigwedge_{j=1}^q\bigwedge_{i\in I_\sigma(q)^c}a_i^{(j)}
\wedge\bigwedge_{i=1}^qu_j\right) ,\Omega^d\right\rangle\nonumber\\
&\quad=(-1)^{c_2+c_3+(d-1)\binom{q}{2}}
\left|\bigwedge_{j=1}^q\bigwedge_{i\in I_\sigma(j)^c}a_i^{(j)}
\wedge\bigwedge_{j=1}^{q}u_j\right|,
\label{D4.5}
\end{align}
where we have used (\ref{star}) in the last step.
Combining \eqref{A4.5} and \eqref{D4.5} and observing that
$$c_1+c_2+c_3+(d-1)\binom{q}{2}\sim 0,$$ 
we finally obtain the required formula.

It remains to verify that the orientation of the joint unit normal
bundle has been chosen appropriately, i.e., that
\begin{equation}   \label{sign}
\left\langle\wedge_{qd-1}\text{ap}DT(\underline{(x,u)},t)\tilde{a}(\underline{(x,u)},t),
\sum_{\substack{0\leq r_1,\ldots ,r_q\leq
d\\ r_1+\cdots +r_q\geq (q-1)d}}\varepsilon^{qd-1-r_1-\cdots -r_q}
\varphi_{r_1,\ldots ,r_q}(\underline{u}(t))\right\rangle >0
\end{equation}
for sufficiently small $\varepsilon >0$. Consider first the case when
$X_1,\ldots ,X_q$ have $C^{1,1}$ smooth boundaries. Since all
curvatures are finite in this case, we get from \eqref{A4.5} and
\eqref{D4.5} that
$$\langle\wedge_{qd-1}\text{ap}DT(\underline{(x,u)},t)\tilde{a}(\underline{(x,u)},t),
\varphi_{d-1,\ldots ,d-1} (\underline{u}(t))\rangle >0.$$
But since $\varphi_{r_1,\ldots ,r_q}$ vanishes over $\nor(X_1,\ldots
,X_q)$ if $r_j=d$ for some $j\in\{1,\ldots,q\}$, this is the leading term in
the polynomial expression of \eqref{sign} which therefore will be 
positive for small $\varepsilon>0$. General sets $X_1,\ldots ,X_q$ of
positive reach can be approximated by parallel bodies with $C^{1,1}$
smooth boundaries so that the corresponding unit normal cycles are
arbitrarily close in the flat norm (see \cite{RaZ01}). Thus, the
expression in \eqref{sign} can be approximated by the corresponding one
for the parallel bodies and since it can never be  zero for
sufficiently small $\varepsilon>0$, it will remain positive.

\end{document}